\numberwithin{equation}{section}\newtheorem{theorem}{Theorem}[section]
\newtheorem{corollary}[theorem]{Corollary}\newtheorem{lemma}[theorem]{Lemma}
\newtheorem{proposition}[theorem]{Proposition}\theoremstyle{remark}
\newtheorem{remark}{Remark}[section]\newtheorem*{ack}{Acknowledgments}
\theoremstyle{definition}
\newcommand{\bra}[1]{\langle #1 \rangle}
\newcommand{\one}[1]{\mathbf{1}_{#1}}
\title[Nonlinear Dirac equation]%
{Endpoint estimates and global existence
for the nonlinear Dirac equation with potential}
\date{\today}
\author{Federico Cacciafesta}
\address{Federico Cacciafesta: 
SAPIENZA --- Universit\`a di Roma,
Dipartimento di Matematica, 
Piazzale A.~Moro 2, I-00185 Roma, Italy}
\email{cacciafe@mat.uniroma1.it}
\author{Piero D'Ancona}
\address{Piero D'Ancona: 
SAPIENZA --- Universit\`a di Roma,
Dipartimento di Matematica, 
Piazzale A.~Moro 2, I-00185 Roma, Italy}
\email{dancona@mat.uniroma1.it}
\subjclass[2000]{%
% 35L70, %  Nonlinear second-order PDE of hyp type
% 58J45%, % Hyperbolic equations
}\keywords{}
\begin{document}\maketitle
  \begin{abstract}
    We prove endpoint estimates with angular regularity for
    the wave and Dirac equations perturbed with a small
    potential. The estimates are applied to prove
    global existence for the cubic Dirac equation perturbed
    with a small potential,
    for small initial $H^{1}$ data with additional angular regularity.
    This implies in particular global existence in the critical 
    energy space $H^{1}$ for small radial data.
  \end{abstract}
%%%%%   INDEX (toc)
%\tableofcontents
%(end)

\section{Introduction}\label{sec:introduction}  %(fold)

The main topic of this paper is the 
cubic massless Dirac equation on $\mathbb{R}^{1+3}$ 
perturbed with a potential
\begin{equation}\label{eq:maindirac}
  iu_{t}=\mathcal{D}u+V(x)u+P_{3}(u,\overline{u}),\qquad
  u(0,x)=f(x),
\end{equation}
where 
$u(t,x):\mathbb{R}_{t}\times \mathbb{R}^{3}_{x}\to \mathbb{C}^{4}$
and $P_{3}(u,\overline{u})$ is any homogeneous,
$\mathbb{C}^{4}$-valued cubic polynomial.
We recall that the Dirac operator $\mathcal{D}$ is defined as
\begin{equation*}
  \mathcal{D}=i^{-1}(\alpha_{1}\partial_{1}+
  \alpha_{2}\partial_{2}+\alpha_{3}\partial_{3})
\end{equation*}
where $\partial_{j}$ are the partial derivatives on $\mathbb{R}^{3}_{x}$
and $\alpha_{j}$ are the Dirac matrices
\begin{equation}\label{eq.diracm}
  \alpha_{1}=
  \begin{pmatrix}
    0 & 0 & 0 & 1 \\
    0 & 0 & 1 & 0 \\
    0 & 1 & 0 & 0 \\
    1 & 0 & 0 & 0
  \end{pmatrix},\quad \alpha_{2}=
  \begin{pmatrix}
    0 & 0 & 0 & -i \\
    0 & 0 & i & 0 \\
    0 & -i & 0 & 0 \\
    i & 0 & 0 & 0
  \end{pmatrix},\quad \alpha_{3}=
  \begin{pmatrix}
    0 & 0 & 1 & 0 \\
    0 & 0 & 0 & -1 \\
    1 & 0 & 0 & 0 \\
    0 & -1 & 0 & 0
  \end{pmatrix}.
\end{equation}
The anticommutation relations
\begin{equation*}
  \alpha_{\ell}\alpha_{k}+\alpha_{k}\alpha_{\ell}
     =2\delta_{kl}I_{4}
\end{equation*}
imply that $\mathcal{D}^{2}=-I_{4}\Delta$ is
a diagonal operator, showing the intimate connection of
the massless Dirac system with the wave equation.

The unperturbed nonlinear Dirac equation
\begin{equation}\label{eq:unperturbed}
  iu_{t}=\mathcal{D}u+F(u),\qquad u(0,x)=f(x)
\end{equation}
is important in relativistic quantum mechanics, and
was studied in a number of works 
(see e.g.~ 
\cite{Reed76-a},
\cite{DiasFigueira87-a},
\cite{Najman92-a},
\cite{Moreau89-a},
\cite{EscobedoVega97-a},
\cite{MachiharaNakamuraOzawa04-a},
\cite{MachiharaNakamuraNakanishi05-a}
and for the more general Dirac-Klein-Gordon system see
\cite{DanconaFoschiSelberg07-a},
\cite{DanconaFoschiSelberg07-b}).
In particular,
it is well known that the cubic nonlinearity is critical
for solvability in the energy space $H^{1}$;
global existence in $H^{1}$
is still an open problem even for small initial data, while
the case of subcritical spaces $H^{s}$, $s>1$
was settled in the positive in
\cite{EscobedoVega97-a},
\cite{MachiharaNakamuraOzawa04-a}.

Criticality is better appreciated in terms of Strichartz
estimates, which are the main tool in the study of
nonlinear dispersive equations. 
The identity
\begin{equation*}%\label{eq:reprdirac}
  e^{it \mathcal{D}}f=
  \cos(t|D|)f+ i \frac{\sin(t|D|)}{|D|}\mathcal{D}f,
  \qquad
  |D|=(-\Delta)^{1/2}
\end{equation*}
shows that the estimates for the Dirac flow $e^{it \mathcal{D}}$
are immediate consequences of the corresponding estimates
for the wave flow $e^{it|D|}$, with the same indices, 
restricted to the
special case of dimension $n=3$.
For the wave equation on $\mathbb{R}^{1+n}$, $n\ge3$,
Strichartz estimates can be combined with
Sobolev embedding and take the general form
\begin{equation}\label{eq:freeStrn}
  \||D|^{\frac nr+\frac1p-\frac n2}
    e^{it |D|}f\|_{L^{p}L^{r}}\lesssim \|f\|_{L^{2}}
\end{equation}
for all $p,r$ such that
\begin{equation*}
  p\in[2,\infty],\qquad
  0<\frac1r\le\frac12-\frac{2}{(n-1)p}.
\end{equation*}
We are using here the mixed time-space $L^{p}L^{q}$ norms defined by
\begin{equation*}
  \|u(t,x)\|_{L^{p}L^{q}}=
  \left\|
    \|u\|_{L^{q}_{x}}
  \right\|_{L^{p}_{t}}.
\end{equation*}
Notice that the limiting case $r=\infty$
\begin{equation}\label{eq:endpfalsen}
  \|e^{it|D|}f\|_{L^{2}L^{\infty}}\lesssim
  \||D|^{\frac{n-1}{2}}f\|_{L^{2}}
\end{equation}
is always excluded and is indeed false for general data.
See \cite{GinibreVelo95-a} and \cite{KeelTao98-a}
for the general Strichartz estimates; concerning the
limiting case $r=\infty$, see 
\cite{KlainermanMachedon93-a},
\cite{FangWang06-a}.
The corresponding estimates for the Dirac equation are given in
\cite{DanconaFanelli07-a}.

In particular, the endpoint estimate
\begin{equation}\label{eq:endpfalse}
  \|e^{it \mathcal{D}}f\|_{L^{2}L^{\infty}}\lesssim
  \||D|f\|_{L^{2}}
\end{equation}
fails. To see the connection with the critical equation
\eqref{eq:unperturbed}, we rewrite it as
a fixed point problem for the map
\begin{equation*}
  v\mapsto \Phi(v)=e^{it \mathcal{D}}f
  +i\int_{0}^{t}e^{i(t-t')\mathcal{D}}P_{3}(v(t'))dt'.
\end{equation*}
If \eqref{eq:endpfalse} were true one could write
\begin{equation*}
  \left\|
    \int_{0}^{t}e^{i(t-t')\mathcal{D}}v(t')^{3}dt
  \right\|_{L^{2}L^{\infty}}
  \lesssim
  \int_{-\infty}^{+\infty}
  \|e^{it \mathcal{D}}e^{-it' \mathcal{D}}P_{3}(v(t'))\|
  _{L^{2}L^{\infty}}dt'
  \lesssim
  \|v^{3}\|_{L^{1}H^{1}}
\end{equation*}
and in conjuction with the conservation of $H^{1}$ energy this
would imply
\begin{equation*}
  \|\Phi(v)\|_{L^{\infty}_{t}H^{1}_{x}}+
     \|\Phi(v)\|_{L^2_{t}L^{\infty}_{x}}\lesssim
  \|f\|_{H^{1}}+\|v\|_{L^{\infty}H^{1}}\|v\|_{L^{2}L^{\infty}}^{2}.
\end{equation*}
In other words, a contraction argument in the norm
$\|\cdot\|_{L^{2}L^{\infty}}+\|\cdot\|_{L^{\infty}H^{1}}$ would be
enough to prove global existence of small $H^{1}$ solutions to
\eqref{eq:unperturbed}.

It was already noted in \cite{KlainermanMachedon93-a} 
that \eqref{eq:endpfalsen} is true for radial data when $n=3$. 
This remark is not of immediate 
application for the Dirac equation, since solutions
corresponding to radial data need not be radial
(due to the fact that the operator
$\mathcal{D}$ does not commute with rotations of $\mathbb{R}^{3}$).
Nevertheless, for radial $H^{1}$
and even more general data, in
\cite{MachiharaNakamuraNakanishi05-a}
global existence was achieved via finer 
estimates, which separate radial from angular regularity. 
We introduce the natural notations
\begin{equation*}
  \|f\|_{L^{a}_{r}L^{b}_{\omega}}=
  \left(
    \int_{0}^{\infty}\|f(r\ \cdot\ )\|_{L^{b}(\mathbb{S}^{n-1})}
    ^{a}r^{n-1}dr
  \right)^{\frac1a}
\end{equation*}
and
\begin{equation*}
  \|f\|_{L^{\infty}_{r}L^{b}_{\omega}}=
  \sup_{r\ge0}
    \|f(r\ \cdot\ )\|_{L^{b}(\mathbb{S}^{n-1})}.
\end{equation*}
Then the following estimate is proved
in \cite{MachiharaNakamuraNakanishi05-a}:
\begin{equation}\label{eq:mnnop}
  n=3,\qquad
  \|e^{it |D|}f\|
      _{L^{2}L^{\infty}_{r}L^{p}_{\omega}}\lesssim
  \sqrt{p}\cdot
  \||D|f\|_{L^{2}},\qquad \forall p<\infty.
\end{equation}
This gives a bound for the standard $L^{2}L^{\infty}$ norm
via Sobolev embedding on the unit sphere $\mathbb{S}^{2}$
\begin{equation}\label{eq:refined}
  \|e^{it |D|}f\|_{L^{2}L^{\infty}}\lesssim
  \|\Lambda^{\epsilon}_{\omega} e^{it |D|}f\|
      _{L^{2}L^{\infty}_{r}L^{p}_{\omega}}
  \lesssim
  \||D|\Lambda^{\epsilon}_{\omega}f\|_{L^{2}}, 
  \qquad
  p>\frac{2}{\epsilon}
\end{equation}
where the angular derivative operator $\Lambda_{\omega}^{s}$
is defined in terms of the Laplace-Beltrami operator on 
$\mathbb{S}^{n-1}$ as
\begin{equation*}
  \Lambda^{s}_{\omega}=(1-\Delta_{\mathbb{S}^{n-1}})^{s/2}.
\end{equation*}
Using \eqref{eq:refined}
one can prove global existence for \eqref{eq:unperturbed}
provided the norm $\||D|\Lambda^{s}_{\omega}f\|_{L^{2}}$ of the data
is small enough for some $s>0$. In particular, this includes all
radial data with a small $H^{1}$ norm.

Our main goal here is to extend this group of results to the
equation \eqref{eq:maindirac} perturbed with a small potential
$V(x)$. We consider first the linear equation
\begin{equation}\label{eq:linearV}
  iu_{t}=\mathcal{D}u+V(x)u+F(t,x).
\end{equation}
The perturbative term $Vu$ can not be handled using the
inhomogeneous version of \eqref{eq:mnnop} because of the
loss of derivatives. Instead, we prove new mixed 
Strichartz-smoothing estimates
(Theorem \ref{the:strichnonom})
\begin{equation}\label{eq:ourmixed}
  n\ge3,\qquad
  \left\|
    \int_0^t e^{i(t-s)|D|}F(s,x)ds
  \right\|_{L^2_tL^\infty_{|x|}L^2_\omega}
  \lesssim
  \|
    \bra{x}^{\frac{1}{2}+}|D|^{\frac{n-1}{2}}\Lambda_{\omega}^{\sigma}F
  \|_{L^2_tL^2_{x}}
\end{equation}
where
\begin{equation}\label{eq:sigma-i}
\begin{split}
  \ \text{\textbf{for $n=3$,}}&\qquad \sigma=0
    \\
  \ \text{\textbf{for $n\ge4$,}}&\qquad \sigma=1-\frac n2.
\end{split}
\end{equation}

\begin{remark}\label{rem:comparehomog}
  As a byproduct of our proof, we obtain
  the following endpoint estimates for the wave flow
  with gain of angular regularity
  (Theorem \ref{the:strichom}):
  \begin{equation}\label{eq:ourwave}
    n\ge3,\qquad
    \| e^{it|D|}f\|_{L^2_tL^\infty_{r}L^2_\omega}
    \lesssim
    \|\Lambda^{\sigma}_{\omega} f\|_{\dot H^\frac{n-1}{2}}
  \end{equation}
  where $\sigma$ is as in \eqref{eq:sigma-i}.
  Although this was not the main purpose of the paper,
  it is interesting to compare
  \eqref{eq:ourwave} with known results. In dimension $n=3$,
  estimate \eqref{eq:ourwave} is just a special case of 
  Theorem 1.1-III in \cite{MachiharaNakamuraNakanishi05-a} 
  where \eqref{eq:ourwave} is proved with
  $\sigma=-\frac34$; it is not known if this value
  is sharp, however in the same paper it is proved that
  the estimate is false for $\sigma<-\frac56$.
  On the other hand, to our knowledge,
  estimate \eqref{eq:ourwave} for $n\ge4$ and \eqref{eq:ourmixed}
  for $n\ge3$ are new. The literature on these kind
  of estimates is extensive and we refer to
  \cite{FangWang08-a}, \cite{JiangWangYu10-a}
  and the references therein for further information.
\end{remark}

Combining \eqref{eq:ourmixed} with the techniques
of \cite{DanconaFanelli08-a} we obtain the following endpoint
result for a 3D linear wave equation with singular potential.
Analogous estimates can be proved for higher dimensions; here we
chose to focus on the 3D case since the 
assumptions on $V$ take a particular simple form:

\begin{theorem}\label{the:strichWE-i}
  Let $n=3$ and consider the Cauchy problem for the wave equation
  $$u_{tt}-\Delta u+V(x)u=F,\qquad
  u(0,x)=f(x),\qquad u_{t}(0,x)=g(x)$$
  under the assumptions:
  \begin{enumerate}\setlength{\itemindent}{-10pt}
    \renewcommand{\labelenumi}{(\roman{enumi})}
    \item $V(x)$ is real valued and
    the positive and negative parts $V_{\pm}$ satisfy
    \begin{equation}\label{eq:assV-i}
      V_{+}\le \frac{C}{|x|^{\frac12-\epsilon}+|x|^{2}},\qquad
      V_{-}\le \frac{\delta}{|x|^{\frac12-\epsilon}+|x|^{2}}
    \end{equation}
    for some $\delta,\epsilon$ sufficiently small and some $C\ge0$;
    \item $-\Delta+V$ is selfadjoint;
    \item 0 is not a resonance for $-\Delta+V_{-}$
    (in the following sense: if $f$ is such that $(-\Delta+V_-)f=0$ 
    and $\bra{x}^{-1}f\in L^{2}$, then $f\equiv0$).
  \end{enumerate}
  Then the solution $u(t,x)$
  satisfies the endpoint Strichartz estimate
  \begin{equation}\label{eq:endWEV-i}
    \| u\|_{L^2_tL^\infty_{r}L^2_\omega}
    \lesssim
    \|f\|_{\dot H^{1}}
    +
    \|g\|_{L^{2}}
    +
    \|\bra{x}^{\frac12+}
      F\|_{L^{2}_{t}L^{2}_{x}}.
  \end{equation}
\end{theorem}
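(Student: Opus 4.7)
The plan is to reduce the second-order wave equation to a pair of half-wave equations, apply the inhomogeneous endpoint estimate \eqref{eq:ourmixed} treating the potential term as a forcing, and then close the resulting bound by means of a weighted smoothing estimate for the perturbed Hamiltonian $H=-\Delta+V$.

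Setting $u_\pm=\frac12\bigl(u\pm(i|D|)^{-1}\partial_t u\bigr)$, so that $u=u_++u_-$, a direct computation shows that each component solves the half-wave equation
\begin{equation*}
  i\partial_t u_\pm \mp |D|u_\pm
  = \pm\frac{1}{2|D|}\bigl(F-Vu\bigr),
\end{equation*}
with initial data of $\dot H^1$-norm controlled by $\|f\|_{\dot H^1}+\|g\|_{L^2}$. Applying the homogeneous estimate \eqref{eq:ourwave} (with $\sigma=0$ in $n=3$) to the free part and the inhomogeneous endpoint estimate \eqref{eq:ourmixed} to the Duhamel contribution---and observing that the $|D|^{(n-1)/2}=|D|$ appearing in \eqref{eq:ourmixed} exactly cancels the $|D|^{-1}$ in the source---one arrives at the preliminary bound
\begin{equation*}
  \|u\|_{L^2_tL^\infty_rL^2_\omega}
  \lesssim
  \|f\|_{\dot H^1} + \|g\|_{L^2}
  + \|\bra{x}^{\frac12+}F\|_{L^2_tL^2_x}
  + \|\bra{x}^{\frac12+}Vu\|_{L^2_tL^2_x}.
\end{equation*}

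The heart of the proof is then to dominate the last summand by the data norms plus an absorbable fraction of the left hand side. By \eqref{eq:assV-i}, the effective weight $\bra{x}^{\frac12+}|V(x)|$ behaves like $|x|^{-\frac12+\epsilon}$ near the origin and like $|x|^{-\frac32+}$ at infinity, so the required control is a weighted smoothing estimate for the perturbed wave equation in the spirit of \cite{DanconaFanelli08-a}. This reduces, via the spectral theorem and Plancherel in the time variable, to the uniform weighted resolvent bound
\begin{equation*}
  \sup_{\lambda\in\mathbb R}
  \bigl\|\,\bra{x}^{-\frac12-}R_{H}(\lambda^2\pm i0)\bra{x}^{-\frac12-}\,\bigr\|
    _{L^2\to L^2}<\infty,
\end{equation*}
obtained by perturbing the free resolvent through the identity $R_H=R_0-R_0 V R_H$ and inverting $I+R_0V$ by a Born series in the weighted-$L^2$ topology. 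The resulting smoothing inequality is of the schematic form $\|\bra{x}^{\frac12+}Vu\|_{L^2_tL^2_x}\le C(\text{data norms})+C'\sqrt{\delta}\,\|u\|_{L^2_tL^\infty_rL^2_\omega}$, where the small constant is produced by the bound on $V_-$; choosing $\delta$ sufficiently small allows this term to be absorbed into the left hand side, yielding \eqref{eq:endWEV-i}.

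The main obstacle is precisely the uniform resolvent bound above. Hypothesis (iii) rules out a blow-up of $R_H$ at the threshold $\lambda=0$ (which would destroy the low-frequency end of the estimate, since the free resolvent kernel $\tfrac{e^{i\lambda|x-y|}}{4\pi|x-y|}$ already has only marginal decay in $|x|$); the smallness of $\delta$ controls the negative part of $V$ so that the Neumann series for $(I+R_0V)^{-1}$ converges in the relevant weighted space; and the positive part $V_+$ is handled through a Hardy-type coercivity argument exploiting selfadjointness (ii). The decay rate $|x|^{-\frac12+\epsilon}$ in \eqref{eq:assV-i} lies on the borderline of the Agmon class for which such perturbative methods succeed, and it is here that the analysis is most delicate.
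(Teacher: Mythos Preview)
Your overall strategy coincides with the paper's: decompose the solution via the free wave propagator, apply the homogeneous endpoint estimate \eqref{eq:ourwave} to the free piece and the mixed Strichartz--smoothing estimate \eqref{eq:ourmixed} to the Duhamel piece, and reduce everything to controlling $\|\bra{x}^{\frac12+}Vu\|_{L^2_tL^2_x}$. The half-wave splitting $u_\pm$ is equivalent to the paper's $\cos(t|D|)$, $\sin(t|D|)/|D|$ representation, so up to that point the arguments are the same.

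The closing step, however, is not stated correctly. You write the smoothing inequality in the schematic form
\[
  \|\bra{x}^{\frac12+}Vu\|_{L^2_tL^2_x}
  \le C(\text{data}) + C'\sqrt{\delta}\,\|u\|_{L^2_tL^\infty_rL^2_\omega},
\]
with the small constant coming from $V_-$ and then absorbed into the left side. This cannot work as written: the contribution of $V_+$ carries the \emph{large} constant $C$ from \eqref{eq:assV-i}, so splitting $V=V_+-V_-$ and trying to absorb against the endpoint Strichartz norm would leave an $O(C)$ term that is not small. The point of hypotheses (i)--(iii) is precisely that one has a smoothing estimate for the \emph{perturbed} flow $e^{it\sqrt{-\Delta+V}}$ itself (this is the content of Proposition~\ref{pro:smooWE}, quoted from \cite{DanconaFanelli08-a}), in which the large positive part $V_+$ is absorbed into the Hamiltonian via positivity/coercivity rather than treated perturbatively. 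That estimate gives directly
\[
  \|\tau_\epsilon^{-1}u\|_{L^2_tL^2_x}\lesssim \text{data norms},
  \qquad \tau_\epsilon(x)=|x|^{\frac12-\epsilon}+|x|,
\]
with no residual term to absorb; one then simply writes $\|\bra{x}^{\frac12+}Vu\|\le\|\bra{x}^{\frac12+}\tau_\epsilon V\|_{L^\infty}\|\tau_\epsilon^{-1}u\|$ and uses \eqref{eq:assV-i} to see that the first factor is finite. So the absorption, if any, takes place inside the proof of the perturbed smoothing estimate (at the level of the weighted $L^2$ norm and the resolvent identity), not at the level of the endpoint Strichartz norm. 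Your sketch of the resolvent argument (Born series for $V_-$, coercivity for $V_+$, non-resonance at $0$) is the right list of ingredients for that inner proof, but the output should be stated as a bound on $\|\tau_\epsilon^{-1}u\|_{L^2L^2}$, not as the inequality you wrote.
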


The next step is to prove suitable smoothing estimates
for the Dirac equation with potential
\begin{equation*}
  iu_{t}=\mathcal{D}u+V(x)u+F(t,x)
\end{equation*}
(see Proposition \ref{pro:smooD} and Corollary \ref{cor:nablau}).
Then by a perturbative argument we
obtain the following endpoint estimates for the linear flows:

\begin{theorem}\label{the:strichD-i}
  Assume that the hermitian matrix $V(x)$ satisfies,
  for $\delta$ sufficiently small, $C$ arbitrary and $\sigma>1$, with
  $v_{\sigma}(x)=|x|^{\frac12}|\log|x||^{\sigma}+\bra{x}^{1+\sigma}$,
  \begin{equation}\label{eq:assnablaV2-i}
    |V(x)|\leq\frac{\delta}
         {v_{\sigma}(x)},\qquad
    |\nabla V(x)|\leq\frac{C}
         {v_{\sigma}(x)}.    
  \end{equation}
  Then the perturbed Dirac flow
  satisfies the endpoint Strichartz estimate
  \begin{equation}\label{eq:enddiracV-i}
    \|e^{it(\mathcal{D}+V)}f\|_{L^2_t L^\infty_{r}L^2_\omega}
    \lesssim
    \| f\|_{H^1}.
  \end{equation}
  If the potential satisfies the stronger assumptions: 
  for some $s>1$,
  \begin{equation}\label{eq:nablaangV2-i}
    \|\Lambda_{\omega}^{s}
         V(|x|\ \cdot\ )\|_{L^{2}(\mathbb{S}^{2})}
    \le \frac{\delta}{v_{\sigma}(x)},
    \qquad
    \|\Lambda_{\omega}^{s}\nabla
         V(|x|\ \cdot\ )\|_{L^{2}(\mathbb{S}^{2})}
    \le \frac{C}{v_{\sigma}(x)},
  \end{equation}
  then we have the endpoint estimate with angular regularity
  \begin{equation}\label{eq:enddiracVang-i}
    \|\Lambda^{s}_{\omega}
      e^{it(\mathcal{D}+V)}f\|_{L^2_t L^\infty_{r}L^2_\omega}
    \lesssim
    \|\Lambda^{s}_{\omega} f\|_{H^1}
  \end{equation}
  and the energy estimate with angular regularity
  \begin{equation}\label{eq:energyang-i}
    \|\Lambda^{s}_{\omega}
      e^{it(\mathcal{D}+V)}f\|_{L^\infty_t H^{1}}
    \lesssim
    \|\Lambda^{s}_{\omega} f\|_{H^1}
  \end{equation}
\end{theorem}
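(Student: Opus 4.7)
The plan is to proceed perturbatively via Duhamel's formula for the free Dirac flow,
$$u(t)=e^{it\mathcal{D}}f - i\int_0^t e^{i(t-s)\mathcal{D}}V(x)u(s)\,ds,$$
and to bound each term in the mixed norm $L^2_tL^\infty_rL^2_\omega$ by combining the free-flow endpoint estimates \eqref{eq:ourwave} and \eqref{eq:ourmixed} (with $n=3$, $\sigma=0$) with the Kato-type smoothing estimates for the perturbed operator $\mathcal{D}+V$ supplied by Proposition \ref{pro:smooD} and Corollary \ref{cor:nablau}.

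For \eqref{eq:enddiracV-i}, the Dirac-wave identity $e^{it\mathcal{D}}=\cos(t|D|)+i\mathcal{D}|D|^{-1}\sin(t|D|)$ together with the commutativity $[\mathcal{D},|D|]=0$ reduces the Dirac Strichartz-smoothing bounds to the wave analogues, at the cost of one derivative. This yields
$$\|u\|_{L^2_tL^\infty_rL^2_\omega}\lesssim\|f\|_{H^1}+\|\langle x\rangle^{\frac12+}\nabla(Vu)\|_{L^2_tL^2_x}.$$
Expanding $\nabla(Vu)=V\nabla u+(\nabla V)u$ and invoking \eqref{eq:assnablaV2-i}, the weight $\langle x\rangle^{\frac12+}/v_\sigma(x)$ behaves like $|x|^{-\frac12}|\log|x||^{-\sigma}$ near the origin and like $\langle x\rangle^{-\frac12-\sigma+}$ at infinity, matching (thanks to $\sigma>1$) the admissible weight in Proposition \ref{pro:smooD} and Corollary \ref{cor:nablau}. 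These bounds control $u$ and $\nabla u$ in $L^2_tL^2_x$ by $\|f\|_{L^2}$ and $\|f\|_{H^1}$ respectively, with the smallness of $\delta$ absorbing any undesired constant on the $\nabla u$ term. This closes \eqref{eq:enddiracV-i}.

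For the refined estimate \eqref{eq:enddiracVang-i}, we apply $\Lambda_\omega^s$ throughout: setting $v=\Lambda_\omega^s u$, this function satisfies
$$iv_t=\mathcal{D}v+Vv+[\Lambda_\omega^s,\mathcal{D}]u+[\Lambda_\omega^s,V]u.$$
The scalar operator $|D|$ commutes with $\Lambda_\omega^s$, so by the Dirac-wave identity the obstruction to $\Lambda_\omega^s$ commuting with $e^{it\mathcal{D}}$ is localised onto $[\Lambda_\omega^s,\mathcal{D}]$; using the decomposition of the Dirac operator in spherical coordinates, this commutator is of the same order as $\mathcal{D}$ and, paired with the factor $|D|^{-1}$ from the identity, produces a contribution that can again be absorbed via the smoothing machinery. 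The source term $[\Lambda_\omega^s,V]u$ is controlled by \eqref{eq:nablaangV2-i}, which imposes exactly on $\Lambda_\omega^s V$ the same weight bound used in the first step for $V$ itself. Rerunning the Duhamel argument for $v$ then yields \eqref{eq:enddiracVang-i}.

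Finally, \eqref{eq:energyang-i} follows from the $L^2$-unitarity of $e^{it(\mathcal{D}+V)}$ (self-adjointness holds since $V$ is hermitian, bounded, and small) together with the propagation of one spatial derivative through the perturbed equation for $\nabla u$, whose source $(\nabla V)u$ is controlled by the same smoothing weight $v_\sigma^{-1}$. The central difficulty throughout is the commutator $[\Lambda_\omega^s,\mathcal{D}]$: because the Dirac operator mixes orbital and spin rotations, angular regularity is not preserved by the free Dirac flow in a naive way, and producing bounds on this commutator sharp enough to be absorbed by the smoothing weights, without breaking the scaling of \eqref{eq:ourmixed}, is the delicate technical core of the argument.
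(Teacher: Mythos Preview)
Your argument for \eqref{eq:enddiracV-i} is essentially the paper's: Duhamel with respect to the free flow, the free endpoint/mixed estimates of Corollary~\ref{cor:freedirac}, and then the smoothing bounds of Proposition~\ref{pro:smooD} and Corollary~\ref{cor:nablau} to close. No issue there.

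The gap is in your treatment of the angular regularity. You propose to write the equation for $v=\Lambda_\omega^{s}u$ and treat $[\Lambda_\omega^{s},\mathcal{D}]u$ as a forcing term, to be ``absorbed via the smoothing machinery''. This does not close. When you feed that forcing into the inhomogeneous endpoint estimate \eqref{eq:freediracnh} (with $s=0$), the right-hand side is
\[
\bigl\|\bra{x}^{\frac12+}\,|D|\,[\Lambda_\omega^{s},\mathcal{D}]u\bigr\|_{L^2_tL^2_x}.
\]
The commutator $[\Lambda_\omega^{s},\mathcal{D}]$ is a first-order operator (in $r$) carrying $\sim\Lambda_\omega^{s-1}$ angular weight, so after the extra $|D|$ you are asking for control of \emph{two} spatial derivatives of $\Lambda_\omega^{s-1}u$ against the \emph{growing} weight $\bra{x}^{1/2+}$. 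The smoothing estimates only give one derivative against the \emph{decaying} weight $w_\sigma^{-1/2}$; and crucially there is no small parameter in front of $[\Lambda_\omega^{s},\mathcal{D}]$ to help you absorb anything. The same obstruction reappears in your sketch of \eqref{eq:energyang-i}.

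The paper avoids this entirely: it never estimates $[\Lambda_\omega^{s},\mathcal{D}]$. Instead (Corollary~\ref{cor:freedirac}) it uses the spinor spherical harmonic decomposition of $L^2(\mathbb{R}^3)^4$ from \cite{Thaller92-a} to build a modified operator $\widetilde{\Lambda}_\omega^{s}$, diagonal on the eigenspaces $H_{m_j,k_j}$ with eigenvalue $|k_j|^{s}$, which \emph{commutes exactly} with $\mathcal{D}$. One then checks that the norms $\|\Lambda_\omega^{s}\cdot\|_{L^2_\omega}$, $\|\Lambda_\omega^{s}\cdot\|_{\dot H^1}$, and $\|\bra{x}^{1/2+}|D|\Lambda_\omega^{s}\cdot\|_{L^2}$ are all equivalent to their $\widetilde{\Lambda}_\omega^{s}$ counterparts. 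With this in hand, \eqref{eq:freedirac} and \eqref{eq:freediracnh} hold with $\Lambda_\omega^{s}$ for free, and the only product to handle in the Duhamel term is $\Lambda_\omega^{s}(Vu)$, controlled by the algebra estimate \eqref{eq:prodest} and assumption \eqref{eq:nablaangV2-i}. This is the missing idea in your proposal.
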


We can finally apply Theorem \ref{the:strichD-i} to the
nonlinear equation \eqref{eq:maindirac} and we obtain:

\begin{theorem}\label{the:globalNL-i}
  Consider the perturbed Dirac system \eqref{eq:maindirac}, where
  the $4\times4$ matrix valued potential $V(x)$
  is hermitian and satisfies assumptions
  \eqref{eq:nablaangV2-i}.
  Let $P_{3}(u,\overline{u})$ be a $\mathbb{C}^{4}$-valued 
  homogeneous cubic polynomial.
  Then for any $s>1$ there exists $\epsilon_{0}$ such that
  for all initial data satisfying
  \begin{equation}\label{eq:data}
    \|\Lambda_{\omega}^{s}f\|_{H^{1}}<\epsilon_{0}
  \end{equation}
  the Cauchy problem \eqref{eq:NLD} admits a unique global solution
  $u\in CH^{1}\cap L^{2}L^{\infty}$
  with $\Lambda^{s}_{\omega}u\in L^{\infty}H^{1}$.
\end{theorem}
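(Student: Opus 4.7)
My plan is to run a Banach fixed-point argument for the Duhamel map
\begin{equation*}
\Phi(v)(t) = e^{it(\mathcal{D}+V)} f + i\int_0^t e^{i(t-s)(\mathcal{D}+V)} P_3(v, \overline{v})(s)\, ds
\end{equation*}
on a small ball of the space $X$ defined by
\begin{equation*}
\|u\|_X := \|u\|_{L^\infty_t H^1_x} + \|\Lambda^s_\omega u\|_{L^\infty_t H^1_x} + \|\Lambda^s_\omega u\|_{L^2_t L^\infty_r L^2_\omega}.
\end{equation*}
The crucial observation is that $s>1$ exceeds half the dimension of $\mathbb{S}^{2}$, so Sobolev on the sphere gives $\|\cdot\|_{L^\infty_\omega}\lesssim \|\Lambda^s_\omega\,\cdot\|_{L^2_\omega}$, whence
$\|u\|_{L^2_tL^\infty_x} = \|u\|_{L^2_tL^\infty_r L^\infty_\omega} \lesssim \|\Lambda^s_\omega u\|_{L^2_tL^\infty_rL^2_\omega}\le \|u\|_X$;
this is the $L^2 L^\infty$-norm that will absorb the cubic nonlinearity.

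Linearly, $e^{it(\mathcal{D}+V)}f$ is bounded in $X$ by $\|\Lambda^s_\omega f\|_{H^1}$ as a direct consequence of the three estimates \eqref{eq:enddiracV-i}, \eqref{eq:enddiracVang-i}, \eqref{eq:energyang-i} of Theorem \ref{the:strichD-i}, which are tailored exactly to the three summands of $\|\cdot\|_X$. The Duhamel piece is then handled through the corresponding inhomogeneous bound
\begin{equation*}
\bigg\|\int_0^t e^{i(t-s)(\mathcal{D}+V)} G(s)\, ds\bigg\|_X \lesssim \|G\|_{L^1_t H^1_x} + \|\Lambda^s_\omega G\|_{L^1_t H^1_x},
\end{equation*}
obtained from the homogeneous estimates by the standard Christ--Kiselev/Minkowski conversion, using self-adjointness of $\mathcal{D}+V$ and that the commutator of $\Lambda^s_\omega$ with $\mathcal{D}+V$ is absorbable thanks to the angular regularity of $V$ in \eqref{eq:nablaangV2-i} (already exploited inside the proof of Theorem \ref{the:strichD-i}).

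The nonlinear step reduces to the trilinear bound
$\|P_3(v,\overline{v})\|_{L^1H^1} + \|\Lambda^s_\omega P_3(v,\overline{v})\|_{L^1H^1} \lesssim \|v\|_X^3$.
The first summand follows from $\|v^3\|_{H^1_x}\lesssim \|v\|_{L^\infty_x}^2\|v\|_{H^1_x}$ and H\"older in $t$, giving $\|v\|_{L^2L^\infty}^2\|v\|_{L^\infty H^1}\lesssim \|v\|_X^3$. The second summand relies on the Banach-algebra property of $H^s(\mathbb{S}^{2})$ for $s>1$: the fractional angular Leibniz rule applied fiberwise in $(t,r)$ yields
\begin{equation*}
\|\Lambda^s_\omega(v^3)(t,r,\cdot)\|_{L^2_\omega}\lesssim \|v(t,r,\cdot)\|_{L^\infty_\omega}^2 \|\Lambda^s_\omega v(t,r,\cdot)\|_{L^2_\omega},
\end{equation*}
and squaring, weighting by $r^2$, integrating in $r$, and pulling out the two $L^\infty_\omega$-factors into $\|v(t)\|_{L^\infty_x}^2$ produces
$\|\Lambda^s_\omega(v^3)(t)\|_{L^2_x}\lesssim \|v(t)\|_{L^\infty_x}^2\|\Lambda^s_\omega v(t)\|_{L^2_x}$; integrating in $t$ and applying H\"older gives $\|v\|_{L^2L^\infty}^2\|\Lambda^s_\omega v\|_{L^\infty L^2}\lesssim \|v\|_X^3$. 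The $\dot H^1_x$ part is analogous since $\Lambda^s_\omega$ commutes with $\partial_r$ in polar coordinates and $\nabla(v^3)=3v^2\nabla v$ has the same multilinear shape. Trilinear polarization yields the matching Lipschitz estimate for $\Phi(v)-\Phi(w)$, so for $\epsilon_0$ small $\Phi$ contracts on $\{\|u\|_X\le 2C\epsilon_0\}$; the unique fixed point is the claimed global solution, and the $CH^1$ continuity in time follows from the Duhamel identity and strong continuity of the unitary group generated by $\mathcal{D}+V$.

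The main obstacle is the angular half of the nonlinear estimate: one must fuse the fractional angular Leibniz rule, the algebra/Sobolev threshold $s>1$ on $\mathbb{S}^{2}$, and the mixed-norm architecture of $X$ into a single clean trilinear bound; once this is in hand, the rest of the argument is a textbook small-data fixed-point iteration.
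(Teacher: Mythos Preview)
Your proof is correct and follows the same fixed-point scheme as the paper (contraction in the space governed by $\|\Lambda^{s}_{\omega}u\|_{L^{2}_{t}L^{\infty}_{r}L^{2}_{\omega}}+\|\Lambda^{s}_{\omega}u\|_{L^{\infty}_{t}H^{1}_{x}}$, Theorem~\ref{the:strichD-i} for the linear piece, Minkowski for the Duhamel term, and a product estimate on $\mathbb{S}^{2}$ for the cubic nonlinearity). The only cosmetic difference is in the trilinear step: the paper uses the algebra property $\|\Lambda^{s}_{\omega}(gh)\|_{L^{2}_{\omega}}\lesssim\|\Lambda^{s}_{\omega}g\|_{L^{2}_{\omega}}\|\Lambda^{s}_{\omega}h\|_{L^{2}_{\omega}}$ for $s>1$ directly, whereas you route through a Moser-type fractional Leibniz rule and then Sobolev on $\mathbb{S}^{2}$---both arrive at the same bound $\|\Lambda^{s}_{\omega}(v^{3})\|_{L^{2}_{x}}\lesssim\|\Lambda^{s}_{\omega}v\|_{L^{\infty}_{r}L^{2}_{\omega}}^{2}\|\Lambda^{s}_{\omega}v\|_{L^{2}_{x}}$, and the paper explicitly remarks (Remark~\ref{rem:generality}) that the Moser estimate is the extra tool one would need to go below $s=1$ but is unnecessary here.
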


In particular, problem \eqref{eq:maindirac} 
has a global unique solution for all radial
data with sufficiently small $H^{1}$ norm.

\begin{remark}\label{rem:wave}
  It is clear that our methods can also be applied to
  nonlinear wave equations perturbed with potentials, and
  allow to prove global well posedness for some types of
  critical nonlinearities. This problem will be the object
  of a further note.
\end{remark}

\begin{remark}\label{rem:generality}
  We did not strive for the sharpest condition on the
  potential $V$, which can be improved
  at the price of additional technicalities which
  we prefer to skip here.  
  Moreover, the result can be extended
  to more general cubic nonlinearities $|F(u)|\sim |u|^{3}$.
  
  Notice also that we need an angular regularity
  $s>1$ on the data, higher than the $s>0$ assumed
  in  the result of \cite{MachiharaNakamuraNakanishi05-a}. 
  It is possible to relax our assumptions to $s>0$;
  the only additional tool we would need to prove is
  a Moser-type product estimate
  \begin{equation*}
    \|\Lambda^{s}_{\omega}(uv)\|_{L^{2}_{\omega}}\lesssim
    \|u\|_{L^{\infty}_{\omega}}
    \|\Lambda^{s}_{\omega}v\|_{L^{2}_{\omega}}
    +
    \|\Lambda^{s}_{\omega}u\|_{L^{2}_{\omega}}
    \|v\|_{L^{\infty}_{\omega}},\qquad
    s>0
  \end{equation*}
  and an analogous one for $\Lambda^{s}_{\omega}|D|(uv)$.
  This would require a fair amount of calculus on the sphere
  $\mathbb{S}^{2}$, and here we preferred to use the
  conceptually
  much simpler algebra property of $H^{s}(\mathbb{S}^{n-1})$
  for $s>\frac{n-1}{2}$.
  
  On the other hand, the extension of our results to the
  massive case
  \begin{equation*}
    iu_{t}=\mathcal{D}u+V(x)u+m\beta u +F(u),\qquad
    m\neq0
  \end{equation*}
  requires a different approach and will be the object of
  further work.
\end{remark}

\begin{ack}
  We are indebted with
  Ilia Krasikov and Jim Wright
  for invaluable conversations  which helped shape up the
  proofs in Section \ref{sec:estimates}.
\end{ack}

% section introduction (end)

\section{Endpoint estimates for the free flows}\label{sec:estimates}  %(fold)

To fix our notations, we recall some basic facts on spherical
harmonics (see \cite{SteinWeiss71-a}) on $\mathbb{R}^{n}$, $n\ge2$.
For $k\geq0$, we denote by $\mathcal{H}_k$ the space of harmonic
polynomials homogeneous of degree $k$, restricted to the unit sphere
$\mathbb{S}^{n-1}$. The dimension of $\mathcal{H}_{k}$ for $k\ge2$ is
\begin{equation*}
  d_{k}=
  \binom{n+k-1}{k}-\binom{n+k-3}{k-2}\simeq \bra k ^{n-2}
\end{equation*}
while $d_{0}=1$ and $d_{1}=n$.
$\mathcal{H}_{k}$ is called the space of \emph{spherical harmonics of
degree $k$}, and we denote by $Y_{k}^{l}$, $1\le l\le d_{k}$ an
orthonormal basis. Since
\begin{equation*}
  L^2(\mathbb{S}^{n-1})=\bigoplus_{k=0}^\infty \mathcal{H}^k
\end{equation*}
every function $f(x)=f(r\omega)$, $r=|x|$, can be expanded as
\begin{equation}\label{eq:spherexp}
  f(r)=
  \sum_{k=0}^\infty\sum_{l=1}^{d_{k}} f^{l}_{k}(r)Y_k^l(\omega)
\end{equation}
and we have
\begin{equation*}
  \|f(r \omega)\|_{L^{2}_{\omega}}=
  \sum_{k\ge0\atop 1\le l\le d_{k}}
  |f_{k}^{l}|^{2},
\end{equation*}
where we use the notation $L^{2}_{\omega}=L^{2}(\mathbb{S}^{n-1})$.
More generally, if $\Delta_{\mathbb{S}}$ is the Laplace-Beltrami
operator on $\mathbb{S}^{n-1}$ and 
\begin{equation*}
  \Lambda_{\omega}=(1-\Delta_{\mathbb{S}})^{1/2},
\end{equation*}
we have the equivalence
\begin{equation*}
  \|\Lambda_{\omega}^{\sigma} f(r \omega)\|_{L^{2}_{\omega}}\simeq
  \sum_{k\ge0\atop 1\le l\le d_{k}}\bra{k}^{2 \sigma}
  |f_{k}^{l}|^{2},\qquad \sigma\in \mathbb{R}.
\end{equation*}
As a consequence we have the equivalence
\begin{equation}\label{eq:equiv1}
  \|\Lambda_{\omega}^{\sigma}f\|_{L^{2}(\mathbb{R}^{n})}^{2}\simeq
  \sum_{k\ge0\atop 1\le l\le d_{k}}\bra{k}^{2 \sigma}
  \|f_{k}^{l}(r)r^{\frac{n-1}{2}}\|
    _{L^{2}_{r}(0,\infty)}^{2}.
\end{equation}
In a similar way
\begin{equation}\label{eq:equiv2}
\begin{split}
  \|\nabla f\|_{L^{2}(\mathbb{R}^{n})}^{2} & =
  (-\Delta f,f)_{L^{2}}\simeq
    \\
  \simeq 
  \sum_{k\ge0\atop 1\le l\le d_{k}}  &
  \left(
  \|r^{\frac{n-1}{2}}\partial_{r} f_{k}^{l}(r)\|
    _{L^{2}_{r}(0,\infty)}^{2}
  +k^{2}
  \|r^{\frac{n-3}{2}} f_{k}^{l}(r)\|
    _{L^{2}_{r}(0,\infty)}^{2}
  \right)
\end{split}
\end{equation}
where we used the following representation of the action of $\Delta$
\begin{equation*}
  -\Delta f (x)= \sum Y^{l}_{k}\left(\frac{x}{|x|}\right)
  \left[-
    r^{1-n}\partial_{r}(r^{n-1}\partial_{r}f^{l}_{k})
    +
    \frac{k(k+n-2)}{r^{2}}f^{l}_{k}
  \right],\qquad r=|x|
\end{equation*}
More generally we have for integer $m$
\begin{equation*}
  -\Delta (1-\Delta_{S})^{m} f (x)
  = \sum (1+k(k+n-2))^{m}
  Y^{l}_{k}
  \left[-
    r^{1-n}\partial_{r}(r^{n-1}\partial_{r}f^{l}_{k})
    +
    \frac{k(k+n-2)}{r^{2}}f^{l}_{k}
  \right]
\end{equation*}
which implies
\begin{equation}\label{eq:equiv3}
\begin{split}
  \|\nabla \Lambda_{\omega}^{m} f\|_{L^{2}(\mathbb{R}^{n})}^{2} & =
  (-\Delta(1-\Delta_{S})^{m} f,f)_{L^{2}}\simeq
    \\
  \simeq 
  \sum_{k\ge0\atop 1\le l\le d_{k}}  &
  \bra{k}^{2m}
  \left(
  \|r^{\frac{n-1}{2}}\partial_{r} f_{k}^{l}(r)\|
    _{L^{2}_{r}(0,\infty)}^{2}
  +k^{2}
  \|r^{\frac{n-3}{2}} f_{k}^{l}(r)\|
    _{L^{2}_{r}(0,\infty)}^{2}
  \right)
\end{split}
\end{equation}
and by interpolation and duality we see that \eqref{eq:equiv3}
holds for all $m\in \mathbb{R}$.

We shall estimate the solution using the following norm:
\begin{equation*}
  \|f\|_{L^{\infty}_{r}L^{2}_{\omega}}=
  \sup_{r>0}\|f(r \omega)\|_{L^{2}_{\omega}(\mathbb{S}^{n-1})}.
\end{equation*}

\begin{theorem}\label{the:strichom}
  For all $n\geq 4$ the following estimate holds:
  \begin{equation}\label{eq:strichartz1}
    \| e^{it|D|}f\|_{L^2_tL^\infty_{r}L^2_\omega}
    \lesssim
    \|\Lambda^{1-\frac n2}_{\omega} f\|_{\dot H^\frac{n-1}{2}},
  \end{equation}
  while for $n=3$ we have
  \begin{equation}\label{eq:strich3D}
    \| e^{it|D|}f\|_{L^2_tL^\infty_{r}L^2_\omega}
    \lesssim
    \|f\|_{\dot H^1}
  \end{equation}
\end{theorem}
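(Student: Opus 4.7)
The natural strategy is to decompose into spherical harmonics, so that the half-wave flow acts sector-by-sector, and then use Plancherel in the time variable to convert the $L^2_t L^\infty_r$ estimate into a pointwise bound on Bessel functions of high order. Concretely, write $f = \sum_{k,l} f_k^l(r) Y_k^l(\omega)$ as in \eqref{eq:spherexp}. Since $e^{it|D|}$ is a radial Fourier multiplier it preserves every $(k,l)$-sector, so $e^{it|D|}f = \sum_{k,l} u_k^l(t,r) Y_k^l(\omega)$. Orthonormality on $\mathbb{S}^{n-1}$ gives $\|e^{it|D|}f(t,r\,\cdot\,)\|_{L^2_\omega}^2 = \sum_{k,l}|u_k^l(t,r)|^2$, and the Minkowski-type inequality $\sup_r \sum \le \sum \sup_r$ yields
\begin{equation*}
  \|e^{it|D|}f\|_{L^2_t L^\infty_r L^2_\omega}^2 \le \sum_{k,l}\|u_k^l\|_{L^2_t L^\infty_r}^2.
\end{equation*}

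Next I would represent $u_k^l$ via the Hankel transform: if $\tilde f_k^l(\rho)$ denotes the radial part of the Fourier transform of $f_k^l Y_k^l$, then with $\nu := k + \tfrac{n-2}{2}$ one has
\begin{equation*}
  u_k^l(t,r) = c_n\, r^{-(n-2)/2}\int_0^\infty J_\nu(r\rho)\, e^{it\rho}\, \tilde f_k^l(\rho)\, \rho^{n/2}\, d\rho.
\end{equation*}
Viewing this (after zero-extension in $\rho\le0$) as a Fourier transform $\rho\mapsto t$, Plancherel gives $\|u_k^l(\cdot,r)\|_{L^2_t}^2 \simeq r^{-(n-2)}\int_0^\infty |J_\nu(r\rho)|^2|\tilde f_k^l(\rho)|^2\rho^n\, d\rho$. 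Changing variable $s = r\rho$ inside $\sup_r$ isolates the scale-invariant quantity $A_\nu := \sup_{s>0} s^{-(n-2)/2}|J_\nu(s)|$, so the sectorial estimate reduces to
\begin{equation*}
  \|u_k^l\|_{L^2_t L^\infty_r}^2 \lesssim A_\nu^2 \int_0^\infty |\tilde f_k^l(\rho)|^2 \rho^{2n-2}\, d\rho = A_\nu^2\, \|f_k^l Y_k^l\|_{\dot H^{(n-1)/2}}^2.
\end{equation*}

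The analytical core is then the uniform Bessel bound $A_\nu \lesssim \bra{\nu}^{-(n-2)/2-1/3}$, proved by splitting $s>0$ into three regimes: in the Airy transition $s\sim\nu$ one uses $|J_\nu(s)|\lesssim \nu^{-1/3}$; in the oscillatory range $s\gg\nu$ one uses $|J_\nu(s)|\lesssim s^{-1/2}$; and in the region $s\ll\nu$ the function $J_\nu$ is super-exponentially small. Substituting back, for $n\ge 4$ we get $A_\nu^2\lesssim \bra{k}^{-(n-2)-2/3}$, which is absorbed into the angular weight $\bra{k}^{2(1-n/2)} = \bra{k}^{-(n-2)}$, and the equivalence \eqref{eq:equiv1} (applied after $|D|^{(n-1)/2}$) turns the resulting sum over $(k,l)$ into $\|\Lambda_\omega^{1-n/2} f\|_{\dot H^{(n-1)/2}}^2$, giving \eqref{eq:strichartz1}. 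For $n=3$ we instead have $A_\nu\lesssim \bra{\nu}^{-5/6}$, in particular uniformly bounded, so a direct summation without any angular weight produces $\|f\|_{\dot H^1}^2$ and yields \eqref{eq:strich3D}. The main obstacle is the Bessel estimate itself—getting a clean uniform bound across the transition and carefully handling the small-$s$ regime for large order—which is presumably where the acknowledged input of Krasikov and Wright enters; once it is in hand, the rest of the proof is a mechanical assembly from the spherical-harmonic orthogonality and the norm equivalences \eqref{eq:equiv1}–\eqref{eq:equiv3}.
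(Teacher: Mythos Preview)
Your argument contains a genuine norm-ordering gap. After Plancherel in $t$ at fixed $r$ you have computed $\|u_k^l(\cdot,r)\|_{L^2_t}^2$, and taking $\sup_r$ of this quantity gives $\|u_k^l\|_{L^\infty_r L^2_t}^2$, \emph{not} $\|u_k^l\|_{L^2_t L^\infty_r}^2$. Minkowski's inequality goes the wrong way here: $\|u\|_{L^\infty_r L^2_t}\le\|u\|_{L^2_t L^\infty_r}$, so what you have bounded is the smaller of the two mixed norms, while the theorem requires the larger one. The opening reduction $\|e^{it|D|}f\|_{L^2_t L^\infty_r L^2_\omega}^2\le\sum_{k,l}\|u_k^l\|_{L^2_t L^\infty_r}^2$ is correct, but you cannot close it with a bound on $\|u_k^l\|_{L^\infty_r L^2_t}$. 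Your argument, as written, proves the weaker estimate with $L^\infty_r L^2_t L^2_\omega$ on the left.

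The paper's proof is designed precisely to avoid this obstruction: instead of applying Plancherel at fixed $r$, it inserts Lommel's integral representation of $J_\nu$ and integrates by parts $k$ times to obtain a pointwise bound
\[
  |u_k^l(t,r)|\ \lesssim\ \omega_k\cdot\frac{1}{|x|}\int_{-|x|}^{|x|}\bigl|\widehat{g}_k^l(t+\lambda)\bigr|\,d\lambda\ \le\ \omega_k\, M(\widehat{g}_k^l)(t),
\]
with $\omega_k$ coming from a uniform bound on Jacobi polynomials (Lemma~\ref{lem:stimapartial}). The key feature is that the right-hand side no longer depends on $r$, so $\sup_r$ is free and one can then take $L^2_t$ using the maximal inequality. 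In other words, the paper replaces ``Plancherel in $t$ then $\sup_r$'' by ``pointwise-in-$(t,r)$ bound by a maximal function of $t$ alone''. Your Bessel-function heuristics for $A_\nu$ are fine (and would even yield an extra $\bra{\nu}^{-1/3}$), but they only control the \emph{size} of $J_\nu$ and say nothing about how the $r$-dependence interacts with the $t$-integration; that is exactly the missing idea.
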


\begin{remark}\label{rem:3D}
  In dimension $n=3$ the previous result is a special case of the
  stronger estimate proved in \cite{MachiharaNakamuraNakanishi05-a}:
  \begin{equation}\label{eq:mnno}
    \| e^{it|D|}f\|_{L^2_tL^\infty_{r}L^2_\omega}
    \lesssim
    \|\Lambda^{-3/4}_{\omega} f\|_{\dot H^1}.
  \end{equation}
  Notice that it is not known if estimate \eqref{eq:mnno} is sharp.
  For higher dimension, estimate \eqref{eq:strichartz1}
  seems to be new; it is reasonable to guess that this result
  is not sharp and might be improved at least to
  \begin{equation}\label{eq:strichartz1conj}
    \| e^{it|D|}f\|_{L^2_tL^\infty_{r}L^2_\omega}
    \lesssim
    \|\Lambda^{\epsilon-\frac {n-1}2}_{\omega} f\|_{\dot H^\frac{n-1}{2}},
    \qquad \epsilon>0.
  \end{equation}
\end{remark}

\begin{proof}%[of ...]
  It is well known that the $\mathcal{H}_{k}$ spaces are invariant
  for the Fourier transform $\mathcal{F}$, and more precisely
  \begin{equation}\label{eq:trasfspher}
    \mathcal{F}\left(c(r) Y^l_k(\omega)\right)(\xi)=
    g(|\xi|)Y^l_k\left(\frac{\xi}{|\xi|}\right)
  \end{equation}
  where $g$ is given by the Hankel transform
  \begin{equation}\label{eq:coeff}
    g(r)=(2\pi)^{\frac{n}{2}}i^{-k}r^{-\frac{n-2}{2}}
    \int_0^\infty  c(\rho)J_{k+\frac{n-2}{2}}(r\rho)
    \rho^\frac{n}{2}d\rho.
  \end{equation}
  Here $J_\nu$ is the \emph{Bessel function} of order $\nu$
  which we shall represent using the Lommel integral form
  \begin{equation}\label{eq:bessrep}
  J_\nu(y)=\displaystyle\frac{(y/2)^\nu}{\pi^{\frac12}\Gamma(\nu+1/2)}
  \int_{-1}^1e^{iy \lambda}(1-\lambda^2)^{\nu-\frac{1}{2}}d\lambda.
  \end{equation}
  Now, given a function $f(x)$, we denote by $\check{f}$ its
  inverse Fourier transform and with $\check{f}^l_k(r)$ 
  the coefficients of the expansion in spherical harmonics
  of $\check{f}$:
  \begin{equation}\label{eq:sphdat}
    \check{f}=\sum_{k=0}^\infty\sum_{l=1}^{d_{k}}
    \check{f}^l_k(r)Y^l_k(\omega).
  \end{equation}
  Recalling (\ref{eq:trasfspher}) we obtain the representation
  \begin{equation}\label{eq:rep}
    f(x)=
     \sum(2\pi)^\frac{n}{2}i^{-k}|x|^{1-\frac{n}{2}}
     Y^l_k\left(\frac{x}{|x|}\right)
     \int_0^\infty\check{f}_k^l(\rho)J_{k+\frac{n-2}{2}}(|x|\rho)
     \rho^\frac{n}{2}d\rho
  \end{equation}
  which implies
  \begin{equation}\label{eq:waveprop}
    e^{it|D|}f=
    \sum(2\pi)^\frac{n}{2}i^{-k}|x|^{1-\frac{n}{2}}
    Y^l_k\left(\frac{x}{|x|}\right)
    \int_0^\infty e^{it\rho}\check{f}_k^l(\rho)
    J_{k+\frac{n-2}{2}}(|x|\rho)\rho^\frac{n}{2}d\rho.
  \end{equation}
  Consider now Lommel's formula \eqref{eq:bessrep}
  for $J_{\nu}$; since
  $e^{i\lambda y}=(iy)^{-k}\partial ^k_\lambda(e^{i\lambda y})$,
  after $k$ integration by parts we obtain
  \begin{equation}\label{eq:bessel2}
    J_{k+\frac{n-2}{2}}(y)=
      c_k y^{\frac{n}{2}-1}\int_{-1}^1e^{i\lambda y}
      \partial^k_\lambda\left((1-\lambda^2)^{k+\frac{n-3}{2}}\right)
      d\lambda
  \end{equation}
  with
  \begin{equation}\label{eq:constant}
  c_k=\frac{i^{k}2^{-\frac{n}{2}-k+1}}
      {\pi^{\frac12}\Gamma(\frac{n-1}{2}+k)}.
  \end{equation}
  Thus we can write
  \begin{equation}\label{eq:rapp}
  \begin{split}
    |x|^{1-\frac{n}{2}}
    &\int_0^\infty e^{it\rho} 
      \check{f}^l_k(\rho) J_{k+\frac{n-2}{2}}(|x|\rho)
      \rho^\frac{n}{2}d\rho=
    \\
      &= c_k \int_{-1}^1 \partial^k_\lambda
      \left((1-\lambda^2)^{k+\frac{n-3}{2}}\right)
      \left[\int_{-\infty}^{+\infty}\one{+}(\rho) \check{f}^l_k(\rho) \rho^{n-1}
      e^{i\rho(t+\lambda |x|)}d\rho\right]d\lambda
  \end{split}
  \end{equation}
  where $\one{+}(\rho)$ is the characteristic function of
  $(0,+\infty)$; regarding the inner integral as a Fourier
  transform we arrive at
  \begin{equation*}
    =c_k\int_{-1}^1\partial^k_\lambda
        \left((1-\lambda^2)^{k+\frac{n-3}{2}}\right)    
        \widehat{g}^l_k(t+\lambda|x|)d\lambda
  \end{equation*}
  where 
  \begin{equation}\label{eq:gkl}
    g^l_k(\rho)=\one{+}(\rho)\check{f}^l_k(\rho)\rho^{n-1}
  \end{equation}
  In conclusion, we have the following representation
  \begin{equation}\label{eq:waveprop2}
    e^{it|D|}f=
      \sum(2\pi)^\frac{n}{2}i^{-k}
      Y^l_k\left(\frac{x}{|x|}\right)
      c_k\int_{-1}^1\partial^k_\lambda
      \left((1-\lambda^2)^{k+\frac{n-3}{2}}\right)     
      \widehat{g}^l_k(t+\lambda|x|)d\lambda
  \end{equation}
  where the constants $c_k$ are given by \eqref{eq:constant} and 
  $g^l_k$ by \eqref{eq:gkl}. 
  Notice that similar representations play a fundamental role
  also in \cite{FangWang08-a}, \cite{JiangWangYu10-a}
  In particular this gives for
  the $L^{2}_{\omega}$ norm of the solution at $t,|x|$ fixed
  the formula
  \begin{equation}\label{eq:angnorm}
    \|e^{it|D|}f(|x|\cdot)\|_{L^2_\omega}^2
    \simeq 
    \sum|c_{k}|^{2}
    \left|\int_{-1}^1\partial^k_\lambda
          \left((1-\lambda^2)^{k+\frac{n-3}{2}}\right)     
      \widehat{g}^l_k(t+\lambda|x|)d\lambda\right|^2.
  \end{equation}
  We now need the following estimate:
  
  \begin{lemma}\label{lem:stimapartial}
    Let $Q_{k}(x)$ be the function
    \begin{equation*}
      Q_{k}(x)=
      \frac{\partial_x^k\left((1-x^2)^{k+\frac{n-3}{2}}\right)}
               {2^k\Gamma(k+\frac{n-1}{2})}.
    \end{equation*}
    Then we have on $x\in[-1,1]$
    \begin{equation}\label{eq:stimapar}
      |Q_{k}(x)|\lesssim\bra{k}^{1-\frac n2}
      \ \text{if $n\ge4$},\qquad
      |Q_{k}(x)|\le 1
      \ \text{if $n=3.$}
    \end{equation}
  \end{lemma}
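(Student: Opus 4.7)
The plan is to recognize $Q_k(x)$ as a multiple of a classical Gegenbauer (ultraspherical) polynomial. Setting $\alpha=\frac{n-2}{2}$, the exponent $k+\frac{n-3}{2}=k+\alpha-\frac12$ appearing in $(1-x^2)^{k+\frac{n-3}{2}}$ matches the Rodrigues formula for the Gegenbauer polynomial $C_k^{\alpha}$. Thus I would start by inserting the identity
\begin{equation*}
  \partial_x^k\bigl[(1-x^2)^{k+\frac{n-3}{2}}\bigr]
   = \frac{(-1)^k 2^k k!\,\Gamma(n-2)\,\Gamma(k+\frac{n-1}{2})}
          {\Gamma(\frac{n-1}{2})\,\Gamma(k+n-2)}
     (1-x^2)^{\frac{n-3}{2}} C_k^{\frac{n-2}{2}}(x),
\end{equation*}
which immediately simplifies $Q_k(x)$ to
\begin{equation*}
  Q_k(x) = \frac{(-1)^k k!\,\Gamma(n-2)}{\Gamma(\frac{n-1}{2})\,\Gamma(k+n-2)}
       (1-x^2)^{\frac{n-3}{2}} C_k^{\frac{n-2}{2}}(x).
\end{equation*}

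For the case $n=3$, $\alpha=\frac12$, the Gegenbauer polynomial degenerates to the Legendre polynomial $P_k$, and the Gamma factors collapse to $(-1)^k$, so $Q_k(x)=(-1)^k P_k(x)$. The bound $|P_k(x)|\le 1$ on $[-1,1]$ is classical and finishes this case immediately.

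For $n\ge 4$, the exponent $\alpha=\frac{n-2}{2}\ge 1$, and the crucial ingredient is the sharp weighted bound for ultraspherical polynomials (Szeg\H{o}, Theorem 7.33.2),
\begin{equation*}
  (1-x^2)^{\alpha/2}\bigl|C_k^{\alpha}(x)\bigr|
  \lesssim \langle k\rangle^{\alpha-1}\quad\text{on }[-1,1].
\end{equation*}
Since $\alpha\ge1$, the remaining factor $(1-x^2)^{\frac{n-3}{2}-\alpha/2}=(1-x^2)^{\alpha/2-1/2}$ is bounded by $1$. Combining this with the Gamma-ratio asymptotics
\begin{equation*}
  \frac{k!}{\Gamma(k+n-2)}\sim \langle k\rangle^{3-n}
\end{equation*}
gives
\begin{equation*}
  |Q_k(x)|\lesssim \langle k\rangle^{3-n}\cdot\langle k\rangle^{\alpha-1}
     =\langle k\rangle^{3-n+\frac{n-2}{2}-1}=\langle k\rangle^{1-\frac{n}{2}},
\end{equation*}
which is exactly the claimed bound.

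The main obstacle is locating and correctly applying the right sharp uniform estimate for Gegenbauer polynomials: a naive bound via the maximum $C_k^\alpha(1)=\binom{k+n-3}{k}\simeq\langle k\rangle^{n-3}$ would only give $|Q_k|\lesssim 1$, missing the decay entirely. The gain comes from the non-trivial $(1-x^2)^{\alpha/2}$ weight in Szeg\H{o}'s inequality, which is precisely tailored to the surplus factor $(1-x^2)^{\frac{n-3}{2}}$ that Rodrigues' formula produces. Once this sharp weighted bound is in hand, the remaining computation is just Stirling-type bookkeeping on the Gamma factors.
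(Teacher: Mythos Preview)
Your proof is correct and follows the same overall strategy as the paper: identify $Q_k$ with a classical ultraspherical/symmetric Jacobi polynomial (your Gegenbauer $C_k^{(n-2)/2}$ is, up to the normalising constant you compute, exactly the paper's $\mathbf{P}_k^{(\frac{n-3}{2},\frac{n-3}{2})}$), handle $n=3$ via the Legendre bound $|P_k|\le 1$, and for $n\ge4$ use the sharp weighted sup bound $(1-x^2)^{\alpha/2}|C_k^\alpha(x)|\lesssim k^{\alpha-1}$ together with Stirling for the gamma ratio.

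The only real difference is in how that weighted bound is obtained. You invoke it as a classical inequality from Szeg\H{o}. The paper instead gives a short self-contained proof of the equivalent Jacobi statement $(1-x^2)^{a}|\mathbf{P}_k^{(a,a)}(x)|\lesssim k^{-1/2}$ for $a\ge\frac12$ via the \emph{Sonine function} technique (following Krasikov): one writes $T_a=(1-x^2)^a\mathbf{P}_k^{(a,a)}$, observes that its ODE forces the auxiliary quantity $S_a=T_a^2+{T_a'}^2/q$ to be monotone on each half of $(-1,1)$ with a maximum at $x=0$, and then evaluates $S_a(0)$ exactly using the explicit values $\mathbf{P}_k^{(a,a)}(0)$, $\mathbf{P}_k^{(a,a)\,\prime}(0)$. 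Your route is shorter and entirely legitimate, while the paper's argument is elementary and avoids quoting the (not entirely trivial) uniform Szeg\H{o} bound. Either way the key analytic content is the same weighted polynomial estimate.
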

  
  \begin{proof}
    We recall that the Jacobi polynomials are defined by
    \begin{equation}\label{eq:Jac}
      \bold{P}^{(\alpha,\beta)}_k(x)
         =\frac{(-1)^k}{2^k k!}
         (1-x)^{-\alpha}(1+x)^{-\beta}
         \frac{d^k}{dx^k}
         \left[(1-x)^{\alpha+k}(1+x)^{\beta+k}\right].
    \end{equation}
    We shall use some standard properties of these polynomial
    which can be found in \cite{AbramowitzStegun64-a}.
    The function $Q_{k}$ can be expressed in terms of
    $\bold{P}^{(\alpha,\alpha)}_k(x)$ with $\alpha=(n-3)/2$ as
    \begin{equation}\label{eq:jacob}
      |Q_{k}(x)|= 
      \frac{k!(1-x^2)^\frac{n-3}{2}}{\Gamma(k+\frac{n-1}{2})}
      \left|\bold{P}^{(\frac{n-3}{2},\frac{n-3}{2})}_k(x)\right|.
    \end{equation}
  Thus in order to estimate $Q_{k}$ we need a bound for the function
  \begin{equation}\label{eq:Tx}
  T_a(x)=(1-x^2)^a\bold{P}^{(a,a)}_k(x),\qquad
  a=\frac{n-3}{2}.
  \end{equation}
  
  The following approach was suggested by Ilia Krasikov, see
  \cite{Krasikov05-a}. Consider the second orderd differential equation
  \begin{equation*}
    f''(x)+p(x)f'(x)+q(x)f(x)=0
  \end{equation*}
  on the interval $(-1,1)$, and define the \emph{Sonine function} as
  \begin{equation*}
    S(f,x)=f(x)^2+\displaystyle\frac{f'(x)^2}{q(x)}
  \end{equation*}
  under the assumption $q>0$. It is easy to
  check that function $S$ satisfies the relation
  \begin{equation*}
    S'=-\left(2\frac{p}{q}+\frac{q'}{q^2}\right)f'^2.
  \end{equation*}
  The function $T_a(x)$ defined in \eqref{eq:Tx} satisfies the 
  differential equation
  \begin{equation*}
    T_a''(x)+\frac{2(2a-1)}{1-x^2}x T_a'(x)
       +\frac{(k+1)(2a+k)}{1-x^2}T_a(x)=0
  \end{equation*}
  so that the associated Sonine function 
  \begin{equation*}
    S_{a}(x)=T_{a}^{2}+\frac{1-x^{2}}{(k+1)(2a+k)}{T'_{a}}^{2}
  \end{equation*}
  satisfies
  \begin{equation}\label{eq:sonine}
    S_a'=\displaystyle-\frac{2(2a-1)}{(k+1)(2a+k)}x\:T_a'^2.
  \end{equation}
  From this identity it is clear that $S_{a}$ has a maximum at
  $x=0$ provided $a\ge1/2$ i.e.~$n\ge4$. In this case we have
  \begin{equation*}
    S_{a}(x)\le
    S_a(0)=T_a(0)^2+\frac{T_a'(0)^2}{(k+1)(2a+k)}=
        \bold{P}_{k}^{(a,a)}(0)^2+
        \frac{{\bold{P}_{k}^{(a,a)}}'(0)^2}{(k+1)(2a+k)}
  \end{equation*}
  Now we recall that, for \emph{even} $k\ge2$,
  \begin{equation*}
    \bold{P}_{k}^{(a,a)}(0)=
    \frac{\Gamma\left(k+a+1\right)}
         {(-2)^{k}
         \Gamma\left(\frac k2+1\right)
         \Gamma\left(\frac k2+a+1\right)}
         \simeq(-1)^{k} k^{-\frac12}
    \qquad
    {\bold{P}_{k}^{(a,a)}}'(0)=0
  \end{equation*}
  where we used the Stirling asymptotics
  \begin{equation*}
    k!\simeq k^{k-1/2}e^{-k},\qquad
    \Gamma(k+a+1)\simeq k^{k+a-1/2}e^{-k}.
  \end{equation*}
  In a similar way, for \emph{odd} $k$,
  \begin{equation*}
    \bold{P}_{k}^{(a,a)}(0)=0,
    \qquad
    {\bold{P}_{k}^{(a,a)}}'(0)=
    \frac{\Gamma\left(k+a+1\right)}
         {(-2)^{k-1}
         \Gamma\left(\frac k2+\frac12\right)
         \Gamma\left(\frac k2+a+\frac12\right)}
    \simeq (-1)^{k-1}k^{\frac12}.
  \end{equation*}
  Thus for all values of $k\ge1$ we have
  \begin{equation*}
    |T_{a}(x)|\le \sqrt{S_{a}(x)}
    \lesssim \frac1{\sqrt{k}}
  \end{equation*}
  and by \eqref{eq:jacob} we conclude that, for $k\ge1$ and $|x|<1$,
  \begin{equation*}%\label{eq:estQ}
    |Q_{k}(x)|\lesssim k^{1-\frac n2}
  \end{equation*}
  which is precisely \eqref{eq:stimapar} for $n\ge4$.
  
  In the remaining case $n=3$ we have $a=0$ and the best we can
  do is to use the sharp inequality $|\bold{P}_{k}^{{(0,0)}}|\le1$
  to obtain
  \begin{equation*}
    |Q_{k}(x)|=\frac{k!}{k!}||\bold{P}_{k}^{{(0,0)}}|\le1.
  \end{equation*}
 \end{proof}

  Using the Lemma, we can continue estimate \eqref{eq:angnorm} as
  follows
  \begin{equation*}
    \|e^{it|D|}f(|x|\cdot)\| _{L^2_\omega}^2 \lesssim
    \sum \omega_{k}^{2}
    \left(\int_{-1}^1|\widehat{g}^l_k(t+\lambda|x|)|d\lambda\right)^2
  \end{equation*}
  where
  \begin{equation}\label{eq:omegak}
    \omega_{k}=1 \quad\text{if $n=3$},\qquad
    \omega_{k}=\bra{k}^{1-\frac n2}
    \quad\text{if $n\ge4$}.
  \end{equation}
  Since
  \begin{equation*}
    \int_{-1}^1|\widehat{g}^l_k(t+\lambda|x|)|d\lambda=
    \frac{1}{|x|}\int_{-|x|}^{|x|}|\widehat{g}^l_k(t+\lambda)|d\lambda
    \le M(\widehat{g}^l_k)(t)
  \end{equation*}
  where $M(g)$ is the centered maximal function, we obtain
  \begin{equation*}
    \|e^{it|D|}f(|x|\cdot)\| _{L^2_\omega}^2 \lesssim
    \sum \omega_{k}^{2} M(\widehat{g}^l_k)(t)^{2}.
  \end{equation*}
  Now we can take the sup in $|x|$ which gives
  \begin{equation*}
    \|e^{it|D|}f\| _{L^{\infty}_{r} L^2_\omega}^2 \lesssim
    \sum \omega_{k}^{2} M(\widehat{g}^l_k)(t)^{2},
  \end{equation*}
  and integrating in time, by the $L^{2}$ boundedness of the
  maximal funcion, we obtain
  \begin{equation*}%\label{eq:almost}
    \|e^{it|D|}f\| _{L^{2}_{t}L^{\infty}_{r} L^2_\omega}^2 \lesssim
    \sum \omega_{k}^{2} \|\widehat{g}^l_k\|^{2}_{L^{2}}\simeq
    \sum \omega_{k}^{2} \|g^l_k\|^{2}_{L^{2}}\simeq
    \sum \omega_{k}^{2} \|\check{f}^l_k(\rho)\rho^{n-1}\|^{2}
         _{L^{2}_{\rho}(0,\infty)}.
  \end{equation*}
  It is immediate to check that the last sum is equivalent to
  \begin{equation*}
    \sum \omega_{k}^{2} \|\check{f}^l_k(\rho)\rho^{n-1}\|^{2}
         _{L^{2}_{\rho}(0,\infty)}\simeq
    \||D|^{\frac{n-1}{2}}\Lambda_{\omega}^{\sigma}f\|^{2}
       _{L^{2}(\mathbb{R}^{n})}
  \end{equation*}
  where $\sigma=1-n/2$ for $n\ge4$, which proves \eqref{eq:strichartz1},
  and $\sigma=0$ for $n=3$, which proves \eqref{eq:strich3D}. 
\end{proof}

Although the method of proof of
Theorem \ref{the:strichom} is probably not sharp for the homogeneous
operator, it has the advantage that it can be adapted to handle
also the nonhomogeneous term and gives the following
mixed Strichartz-smoothing estimate:

\begin{theorem}\label{the:strichnonom}
  For any $n\ge3$, the following estimate holds:
  \begin{equation}\label{eq:strichartz2}
    \left\|
      \int_0^t e^{i(t-s)|D|}F(s,x)ds
    \right\|_{L^2_tL^\infty_{|x|}L^2_\omega}
    \lesssim
    \|
      \bra{x}^{\frac{1}{2}+}|D|^{\frac{n-1}{2}}\Lambda_{\omega}^{\sigma}F
    \|_{L^2_tL^2_{x}}
  \end{equation}
  where
  \begin{equation*}
    \sigma=1-\frac n2 \quad\text{if $n\ge4$},\qquad
    \sigma=0 \quad\text{if $n=3$}.
  \end{equation*}
\end{theorem}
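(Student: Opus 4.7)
The plan is to adapt the proof of Theorem \ref{the:strichom} almost verbatim, inserting an extra integration in $s$ and then paying for this by a weighted estimate in time. First, I would expand $F$ in spherical harmonics $F(s,x)=\sum F^l_k(s,r)Y^l_k(\omega)$ and apply to the Duhamel operator the Bessel--Lommel representation \eqref{eq:bessel2} together with $k$ integrations by parts exactly as in \eqref{eq:rapp}--\eqref{eq:waveprop2}. Setting $G^l_k(s,\rho)=\one{+}(\rho)\check F^l_k(s,\rho)\rho^{n-1}$ and exchanging the order of the $s$- and $\rho$-integrations, I obtain
\begin{equation*}
\int_0^t e^{i(t-s)|D|}F(s)\,ds=\sum(2\pi)^{\frac n2}i^{-k}c_k\,Y^l_k\!\left(\tfrac{x}{|x|}\right)\int_{-1}^1\!\partial_\lambda^k\!\bigl((1-\lambda^2)^{k+\frac{n-3}{2}}\bigr)\!\int_0^t\!\widehat{G}^l_k(s,t-s+\lambda|x|)\,ds\,d\lambda,
\end{equation*}
where $\widehat{G}^l_k(s,\cdot)$ denotes the Fourier transform in the $\rho$ variable at fixed $s$. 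Lemma~\ref{lem:stimapartial} bounds $|c_k\partial^k_\lambda((1-\lambda^2)^{k+(n-3)/2})|\lesssim\omega_k$ with the weights $\omega_k$ from \eqref{eq:omegak}, and the change of variables $\mu=\lambda|x|$ yields $\int_{-1}^1|\widehat G^l_k(s,t-s+\lambda|x|)|d\lambda\le 2M(\widehat G^l_k(s,\cdot))(t-s)$.

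Taking the $L^\infty_r L^2_\omega$-norm at fixed time and using orthogonality of spherical harmonics gives the pointwise bound
\begin{equation*}
\left\|\int_0^t e^{i(t-s)|D|}F\,ds\right\|^2_{L^\infty_rL^2_\omega}(t)\lesssim\sum_{k,l}\omega_k^2\left(\int_0^t M(\widehat G^l_k(s,\cdot))(t-s)\,ds\right)^{\!2}.
\end{equation*}
Next I integrate in $t$. By Cauchy--Schwarz in $s$ with the integrable weight $\bra{t-s}^{-1-\epsilon}$ and Fubini, the integral in $t$ is dominated by $\sum_{k,l}\omega_k^2\int_0^\infty\!\!\int_0^\infty\bra{\tau}^{1+\epsilon}M(\widehat G^l_k(s,\cdot))(\tau)^2\,d\tau\,ds$. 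The remaining task is to convert this time-frequency weight into the spatial weight on the right-hand side of \eqref{eq:strichartz2}. For each fixed $s$ I would do this via the $TT^*$ combination of Theorem~\ref{the:strichom} with the classical Kato smoothing estimate $\|\bra{x}^{-\frac12-}e^{is|D|}f\|_{L^2_{s,x}}\lesssim\|f\|_{L^2}$: writing the full-line Duhamel integral as $e^{it|D|}\int e^{-is|D|}F(s)\,ds$, applying the dual smoothing bound to the spatial factor and Theorem~\ref{the:strichom} to the homogeneous evolution produces exactly the claimed estimate on $\mathbb R$. The truncation to $s\in[0,t]$ is then restored by the Christ--Kiselev lemma, using that in the operator under consideration the refined angular norm $L^\infty_rL^2_\omega$ provides the strictness needed over the plain $L^2_x$ bound in $s$. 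Finally, the Plancherel-type identities \eqref{eq:equiv1}--\eqref{eq:equiv3}, applied to $G^l_k=\one{+}\check F^l_k\rho^{n-1}$, identify the resulting $\ell^2_{k,l}$-sum with $\|\bra x^{\frac12+}|D|^{\frac{n-1}{2}}\Lambda^\sigma_\omega F\|^2_{L^2_{t,x}}$.

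The main obstacle is the truncation issue in the final paragraph: the Christ--Kiselev lemma is borderline at $L^2_t\to L^2_t$, and one must argue carefully (either by exploiting that $L^\infty_rL^2_\omega$ is genuinely stronger than the $L^2_x$-type norm coming from Kato smoothing, or by carrying out the weighted $s$-integration argument directly on $\int_0^t$ via the Minkowski/Cauchy--Schwarz splitting above, bypassing $TT^*$). A secondary technical point is that the weight $\bra\tau^{1+\epsilon}$ is not of Muckenhoupt $A_2$ class, so one cannot directly apply the weighted maximal inequality to $M(\widehat G^l_k)$; instead the maximal function must be disposed of by returning to $\widehat G^l_k$ itself via duality before applying weighted Plancherel.
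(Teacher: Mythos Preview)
Your representation via spherical harmonics and the use of Lemma~\ref{lem:stimapartial} to produce the weights $\omega_k$ are exactly right, and so is the reduction to controlling $\int_0^t\widehat G^l_k(s,t-s+\lambda|x|)\,ds$. The genuine gap is in the order of the two remaining operations, and both of the fixes you propose fail.

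In your argument you first average in $\lambda$ (maximal function), then apply Cauchy--Schwarz in $s$ with weight $\bra{t-s}^{-1-\epsilon}$. This lands the weight $\bra{\tau}^{1+\epsilon}$ on $M(\widehat G^l_k(s,\cdot))(\tau)^2$, and as you note this weight is not $A_2$, so the maximal inequality is unavailable; ``returning to $\widehat G^l_k$ via duality'' does not repair this, because the dual of a weighted maximal inequality with a non-$A_2$ weight is equally false. The $TT^*$/Christ--Kiselev route is also blocked: the truncation lemma requires strict inequality between the \emph{time} Lebesgue exponents, and here both sides are $L^2_t$; that $L^\infty_rL^2_\omega$ is finer than $L^2_x$ in the space variables is irrelevant.

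The paper's proof reverses your order of operations, and this is the missing idea. One first applies Cauchy--Schwarz in $s$ with the weight placed on the \emph{second argument} of $\widehat G^l_k$, i.e.\ with $\bra{t-s+\lambda}^{-\frac12-}$, obtaining
\[
\int_0^t|\widehat G^l_k(s,t-s+\lambda)|\,ds\ \lesssim\ Q^l_k(t+\lambda),\qquad
Q^l_k(\mu)=\Bigl(\int|\widehat G^l_k(s,\mu-s)|^2\bra{\mu-s}^{1+}ds\Bigr)^{1/2}.
\]
Now the $\lambda$-average is the \emph{unweighted} maximal function of the single function $Q^l_k$, so $\|M(Q^l_k)\|_{L^2_t}\lesssim\|Q^l_k\|_{L^2_t}$ applies with no $A_2$ issue, and a change of variables gives $\|Q^l_k\|_{L^2_t}^2=\|\bra{r}^{\frac12+}\widehat G^l_k\|_{L^2_{t,r}}^2$. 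This handles the truncated integral $\int_0^t$ directly, with no appeal to Christ--Kiselev.

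A second point you underestimate is the passage from $\sum\omega_k^2\|\bra{r}^{\frac12+}\widehat G^l_k\|_{L^2}^2$ to $\|\bra{x}^{\frac12+}|D|^{\frac{n-1}{2}}\Lambda_\omega^\sigma F\|_{L^2}^2$. This is not a direct consequence of \eqref{eq:equiv1}--\eqref{eq:equiv3}: the weight $\bra{r}^{\frac12+}$ lives on the one-dimensional Fourier side, and converting it to $\bra{x}^{\frac12+}$ on $\mathbb R^n$ requires an interpolation between the unweighted case (Plancherel) and the case of weight $\bra{r}$ (where one differentiates $\one{+}(\lambda)\lambda^{(n-1)/2}\check f^l_k$, uses \eqref{eq:equiv3}, and handles the $k=0$ term separately via Hardy's inequality).
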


\begin{proof}
  As in the proof of the previous theorem, we expand $F$ in 
  spherical harmonics and we obtain the representation
  \begin{equation}\label{eq:nonomorap}
  \begin{split}
    \int_0^t e^{i(t-s)|D|}F & (s,x) ds=
    \\
    =\sum(2\pi)^{\frac n2} &  i^{-k}c_{k}
      Y^l_k\left(\frac{x}{|x|}\right)
      \int_{-1}^1\partial^k_\lambda
      \left((1-\lambda^2)^{k+\frac{n-3}{2}}\right)     
      \widehat{G}^l_k(s,t-s+\lambda|x|)d\lambda
  \end{split}
  \end{equation}
  with the constants $c_k$ as in \eqref{eq:constant}, where the functions
  $G^{l}_{k}$ are defined as follows:
  denoting by $F^{l}_{k}(t,r)$ the coefficients of the expansion
  into spherical harmonics of the inverse Fourier transform 
  $\check F=\mathcal{F}^{-1}(F)$
  \begin{equation*}
    \check F(s,x)=
    \sum \check F_{k}^{l}(s,|x|)Y^{l}_{k}\left(\frac{x}{|x|}\right)
  \end{equation*}
  and by $G^{l}_{k}$ the functions
  \begin{equation}\label{eq:Gkl}
    G^{l}_{k}(s,\rho)=\one{+}(\rho)
       \rho^{n-1}\check{F}^l_k(s,\rho),
  \end{equation}
  the $\widehat{G}^{l}_{k}(s,r)$ are the Fourier transforms of $G^{l}_{k}$
  in the second variable:
  \begin{equation*}
    \widehat{G}^{l}_{k}(s,r)=\int_{-\infty}^{+\infty}
      e^{ir\rho}G^{l}_{k}(s,\rho)d\rho.
  \end{equation*}
  Thus applying Lemma \ref{lem:stimapartial} we obtain
  \begin{equation}\label{eq:modnon}
    \left|\int_0^t e^{i(t-s)|D|}F(s,x)ds\right|\lesssim
    \sum |Y^l_k|   %\left(\frac{x}{|x|}\right)
    \frac{\omega_{k}}{|x|}
    \int_{-|x|}^{|x|}d \lambda \int_0^t ds
    |\widehat{G}^l_k(s,t-s+\lambda)|
  \end{equation}
  where $\omega_k$ is the same as in \eqref{eq:omegak}.
  We estimate the integral in $s$ as follows
  \begin{equation*}
  \begin{split}
    \int_{0}^{t}|\widehat{G}^{l}_{k}|ds
    \le &
    \int_{-\infty}^{+\infty}
      \bra{\lambda+t-s}^{\frac12+}\bra{\lambda+t-s}^{-\frac12-}
      |\widehat{G}^{l}_{k}(s,\lambda+t-s)|ds
    \\
    \lesssim &
    \left(
    \int
      \bra{\lambda+t-s}^{1+}
      |\widehat{G}^{l}_{k}(s,\lambda+t-s)|^{2}ds
    \right)^{\frac12}=
    Q^{l}_{k}(\lambda+t),
  \end{split}
  \end{equation*}
  where
  \begin{equation*}
    Q^{l}_{k}(\mu)=
      \left(\int_{-\infty}^\infty |\widehat G^l_k(s,\mu-s)|^2
      \langle \mu-s\rangle^{1+}ds\right)^\frac{1}{2}.
  \end{equation*}
  Thus we see that
  \begin{equation*}
    \frac{1}{|x|}
      \int_{-|x|}^{|x|}d \lambda \int_0^t ds
      |\widehat{G}^l_k(s,t-s+\lambda)|\lesssim
      \frac{1}{|x|}
        \int_{-|x|}^{|x|}Q^{l}_{k}(\lambda+t)d \lambda
      \le M(Q^{l}_{k})(t).
  \end{equation*}
  Coming back to \eqref{eq:modnon} we obtain
  \begin{equation*}
    \left|\int_0^t e^{i(t-s)|D|}F(s,x)ds\right|\lesssim
    \sum \omega_{k}|Y^{l}_{k}|  %\left(\frac{x}{|x|}\right)
    M(Q^{l}_{k})(t)
  \end{equation*}
  and taking first the $L^{2}_{\omega}$ norm, then the sup in $|x|$,
  then the $L^{2}_{t}$ norm, by the $L^{2}$ boundedness of the
  maxiaml function we have
  \begin{equation*}
    \left\|
        \int_0^t e^{i(t-s)|D|}F(s,x)ds\right\|^{2}
    _{L^{2}_{t}L^{\infty}_{r}L^{2}_{\omega}}
    \lesssim
    \sum \omega_{k}^{2}\|Q^{l}_{k}(t)\|_{L^{2}_{t}}^{2}.
  \end{equation*}
  The definition of $Q^{l}_{k}$ implies
  \begin{equation*}
    \int |Q^{l}_{k}(t)|^{2}dt=
    \iint|\widehat{G}^{l}_{k}(s,\mu-s)|^{2}\bra{\mu-s}^{1+}
    dsd\mu=
    \|\widehat{G}^{l}_{k}(t,r)\bra{r}^{\frac12+}\|^{2}
    _{L^{2}_{t}L^{2}_{r}}
  \end{equation*}
  and hence
  \begin{equation}\label{eq:almostdone}
    \left\|
        \int_0^t e^{i(t-s)|D|}F(s,x)ds\right\|^{2}
    _{L^{2}_{t}L^{\infty}_{r}L^{2}_{\omega}}
    \lesssim
    \sum \omega_{k}^{2}
      \|\widehat{G}^{l}_{k}(t,r)\bra{r}^{\frac12+}\|^{2}
      _{L^{2}_{t}L^{2}_{r}}.
  \end{equation}
  Recalling the definition \eqref{eq:Gkl} of $G^{l}_{k}$,
  we see that to obtain \eqref{eq:strichartz2} 
  it is sufficient to prove the
  following general inequality for $s=1/2+$ and
  arbitrary $\sigma$:
  \begin{equation}\label{eq:genineq}
    \sum \bra{k}^{2 \sigma}
      \left\|\bra{y}^{s}\mathcal{F}_{\lambda\to y}\left(
          \one{+}(\lambda)\lambda^{n-1}
          \check f^{l}_{k}(\lambda)\right)\right\|^{2}_{L^{2}_{y}}
    \lesssim\|\bra{x}^{s}\Lambda^{\sigma}_{\omega}
         |D|^{\frac{n-1}{2}}f\|
              ^{2}_{L^{2}(\mathbb{R}^{b})}.
  \end{equation}
  Here as usual $\check f^{l}_{k}$ denotee the coefficients
  in the expansion in spherical harmonics of the inverse Fourier
  transform $\check f= \mathcal{F}^{-1}f$.
  
  First of all, since 
  $\mathcal{F}^{-1}(|D|^{\frac{n-1}{2}}f)=|\xi|^{\frac{n-1}{2}}\check f$,
  we see that it is enough to prove, for $0\le s\le 1$
  and arbitrary $\sigma$, the slightly simpler
  \begin{equation}\label{eq:genineq2}
    \sum \bra{k}^{2 \sigma}
      \left\|\bra{y}^{s}\mathcal{F}_{\lambda\to y}\left(
          \one{+}(\lambda)\lambda^{\frac{n-1}{2}}
          \check f^{l}_{k}(\lambda)\right)\right\|^{2}_{L^{2}_{y}}
    \lesssim\|\bra{x}^{s}\Lambda^{\sigma}_{\omega}f\|
              ^{2}_{L^{2}(\mathbb{R}^{b})}.
  \end{equation}
  The inequality will follow by interpolation between the cases
  $s=0$ and $s=1$; indeed, we can regard it as the statement that
  the operator $T$ defined as
  \begin{equation*}
    T:f\mapsto \left\{
       \bra{y}^{s}\mathcal{F}_{\lambda\to y}\left(
           \one{+}(\lambda)\lambda^{\frac{n-1}{2}}
       \check g^{l}_{k}\right)
       \right\}_{l,k},\qquad g=\Lambda^{-\sigma}_{\omega}f
  \end{equation*}
  which associates to the function $f$ the sequence of coefficients
  in the expansion of $\mathcal{F}^{-1}(\Lambda^{-\sigma}_{\omega}f)$,
  multiplied by $\lambda^{(n-1)/2}\one{+}$, transformed again and
  multiplied by $\bra{y}^{s}$, is bounded between the weighted spaces
  \begin{equation*}
    T:L^{2}(\bra{x}^{2s}dx)\to 
      \ell^{2}_{\bra{k}^{2 \sigma}}(L^{2}(\bra{\lambda}^{2s}d \lambda)).
  \end{equation*}
  When $s=0$ we have by Plancherel's Theorem and by \eqref{eq:equiv1}
  \begin{equation*}
  \begin{split}
    \sum \bra{k}^{2 \sigma}
      \left\|\mathcal{F}_{\lambda\to y}\left(
          \one{+}(\lambda)\lambda^{\frac{n-1}{2}}
          \check f^{l}_{k}(\lambda)\right)\right\|^{2}_{L^{2}_{y}}
      \simeq &
    \\
    \simeq
    \sum \bra{k}^{2 \sigma}
      \left\|\lambda^{\frac{n-1}{2}}
          \check f^{l}_{k}(\lambda)\right\|
          ^{2}_{L^{2}_{\lambda}(0,\lambda)}
    &\simeq \|\Lambda^{\sigma}_{\omega}\check f\|
       ^{2}_{L^{2}(\mathbb{R}^{n})}.
  \end{split}
  \end{equation*}
  Since $\Lambda_{\omega}$
  commutes with the Fourier transform, indeed
  \begin{equation*}
    \mathcal{F}(-\Delta_{S}f)=
    \mathcal{F}\sum(x_{j}\partial_{k}-x_{j}\partial_{j})^{2}f
    =\sum(\partial_{j}\xi_{k}-\partial_{k}\xi_{j})^{2}
    \mathcal{F}f,
  \end{equation*}
  again by Plancherel we obtain \eqref{eq:genineq2} for $s=0$.
  
  To handle the case $s=1$ we consider the quantity
  \begin{equation*}
  \begin{split}
    \left\|y\mathcal{F}_{\lambda\to y}\left(
        \one{+}(\lambda)\lambda^{\frac{n-1}{2}}
        \check f^{l}_{k}(\lambda)\right)\right\|^{2}_{L^{2}_{y}}
    =  &
    \left\|\partial_{\lambda}\left(
        \one{+}(\lambda)\lambda^{\frac{n-1}{2}}
        \check f^{l}_{k}(\lambda)\right)\right\|^{2}_{L^{2}_{\lambda}}
    \lesssim 
    \\
    \lesssim
    \left\|\lambda^{\frac{n-1}{2}}
        \partial_{\lambda}\check f^{l}_{k}(\lambda)
        \right\|^{2}_{L^{2}_{\lambda}(0,\infty)}+  &
        \left\|\lambda^{\frac{n-3}{2}}
            \check f^{l}_{k}(\lambda)
            \right\|^{2}_{L^{2}_{\lambda}(0,\infty)}.
  \end{split}
  \end{equation*}
  Multiplying by $\bra{k}^{2 \sigma}$, summing over $l,k$ and
  recalling \eqref{eq:equiv3}, we obtain
  \begin{equation*}
    \sum \bra{k}^{2 \sigma}
      \left\|y\mathcal{F}_{\lambda\to y}\left(
          \one{+}(\lambda)\lambda^{\frac{n-1}{2}}
          \check f^{l}_{k}(\lambda)\right)\right\|^{2}_{L^{2}_{y}}
      \lesssim
    \|\nabla \Lambda^{\sigma}_{\omega}\check f\|
      ^{2}_{L^{2}(\mathbb{R}^{n})}
    + \left\|\lambda^{\frac{n-3}{2}}
          \check f^{0}_{0}(\lambda)
          \right\|^{2}_{L^{2}_{\lambda}(0,\infty)}
  \end{equation*}
  where the last term can not estimated by \eqref{eq:equiv3}
  because of the factor $k^{2}$ which vanishes when $k=0$.
  However we have
  \begin{equation*}
    \check f^{0}_{0}(\lambda)=\int_{|\omega|=1}\check f(\lambda \omega)
    d \lambda=
    \int_{|\omega|=1}\Lambda^{\sigma}_{\omega}\check f(\lambda \omega)
    d \lambda
  \end{equation*}
  which implies, using Hardy's inequlity
  \begin{equation*}
    \left\|\lambda^{\frac{n-3}{2}}
          \check f^{0}_{0}(\lambda)
          \right\|^{2}_{L^{2}_{\lambda}(0,\infty)}
    \lesssim 
    \left\|\frac{\Lambda^{\sigma}_{\omega}\check f}
                 {|\xi|}\right\|_{L^{2}(\mathbb{R}^{n})}
    \lesssim\|\nabla \Lambda^{\sigma}_{\omega}\check f\|_{L^{2}}.
  \end{equation*}
  Thus we have proved
  \begin{equation*}
    \sum \bra{k}^{2 \sigma}
      \left\|y\mathcal{F}_{\lambda\to y}\left(
          \one{+}(\lambda)\lambda^{\frac{n-1}{2}}
          \check f^{l}_{k}(\lambda)\right)\right\|^{2}_{L^{2}_{y}}
      \lesssim
      \|\nabla \Lambda^{\sigma}_{\omega}\check f\|
        ^{2}_{L^{2}(\mathbb{R}^{n})}
      \simeq\||x| \Lambda^{\sigma}_{\omega}f\|_{L^{2}}^{2}
  \end{equation*}
  again by the commutation of $\Lambda_{\omega}$ with the Fourier
  transform. This gives \eqref{eq:genineq2} for $s=1$ and
  concludes the proof of the Theorem.
\end{proof}

\begin{remark}\label{rem:angularfree}
  Since the operator $\Lambda_{\omega}$ commutes with $|D|$, 
  estimates \eqref{eq:strichartz1}, \eqref{eq:strich3D} and
  \eqref{eq:strichartz2} obviously generalize to the following;
  for any real $s\ge0$,
  \begin{equation}\label{eq:strichartzang1}
    \|\Lambda_{\omega}^{s} e^{it|D|}f\|
         _{L^2_tL^\infty_{r}L^2_\omega}
    \lesssim
    \|\Lambda^{s+ \sigma}_{\omega} f\|_{\dot H^\frac{n-1}{2}}
  \end{equation}
  and
  \begin{equation}\label{eq:strichartzang2}
    \left\|\Lambda_{\omega}^{s} 
      \int_0^t e^{i(t-s)|D|}F(s,x)ds
    \right\|_{L^2_tL^\infty_{|x|}L^2_\omega}
    \lesssim
    \|
      \bra{x}^{\frac{1}{2}+} 
      |D|^{\frac{n-1}{2}}\Lambda_{\omega}^{s+\sigma}F
    \|_{L^2_tL^2_{x}}
  \end{equation}
  where
  \begin{equation*}
    \sigma=1-\frac n2 \quad\text{if $n\ge4$},\qquad
    \sigma=0 \quad\text{if $n=3$}.
  \end{equation*}
\end{remark}

From the previous estimate for the free wave equation
it is not difficult to obtain analogous
endpoint Strichartz and Strichartz-smoothing estimates
for the 3D Dirac system:

\begin{corollary}\label{cor:freedirac}
  Let $n=3$.
  Then the flow $e^{it \mathcal{D}}$ satisfies,
  for all $s\ge0$, the estimates
  \begin{equation}\label{eq:freedirac}
    \| \Lambda_{\omega}^{s} 
         e^{it \mathcal{D}}f\|_{L^2_tL^\infty_{r}L^2_\omega}
    \lesssim
    \|\Lambda_{\omega}^{s} f\|_{\dot H^1},
  \end{equation}
  and
  \begin{equation}\label{eq:freediracnh}
    \left\|\Lambda_{\omega}^{s} 
      \int_0^t e^{i(t-t')\mathcal{D}}F(t',x)dt'
    \right\|_{L^2_tL^\infty_{|x|}L^2_\omega}
    \lesssim
    \|
      \bra{x}^{\frac{1}{2}+}|D|\Lambda_{\omega}^{s} F
    \|_{L^2_tL^2_{x}}.
  \end{equation}
\end{corollary}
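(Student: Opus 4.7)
The plan is to reduce both estimates to the free wave bounds \eqref{eq:strichartzang1} and \eqref{eq:strichartzang2} via the spectral decomposition of $\mathcal{D}$. Since $\mathcal{D}^2 = |D|^2 I_4$, the matrix-valued operators $\Pi_\pm = \tfrac12(I_4 \pm \mathcal{D}/|D|)$ are orthogonal projectors with $\mathcal{D}\Pi_\pm = \pm|D|\Pi_\pm$, yielding the key identity
\begin{equation*}
  e^{it\mathcal{D}}f = e^{it|D|}\Pi_+ f + e^{-it|D|}\Pi_- f
\end{equation*}
and an analogous splitting of the Duhamel integral into two retarded wave parametrices applied to $\Pi_\pm F$. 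The time-reversed parametrix $\int_0^t e^{-i(t-s)|D|}(\cdot)\,ds$ obeys the same bound \eqref{eq:strichartzang2}, either by repeating the proof of Theorem \ref{the:strichnonom} with a sign change or by complex conjugation.

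For the homogeneous estimate \eqref{eq:freedirac}, applying \eqref{eq:strichartzang1} with $n=3$, $\sigma=0$ to each half-wave flow reduces the task to $\|\Lambda^s_\omega \Pi_\pm f\|_{\dot H^1}\lesssim\|\Lambda^s_\omega f\|_{\dot H^1}$, equivalently $\|\Lambda^s_\omega\mathcal{D} f\|_{L^2}\lesssim\|\Lambda^s_\omega f\|_{\dot H^1}$ since $|D|$ commutes with $\Lambda^s_\omega$. By Plancherel and the commutation of $\Lambda_\omega$ with the Fourier transform (used in the proof of Theorem \ref{the:strichnonom}), it suffices to bound $\widetilde\Lambda^s_\omega[(\alpha\cdot\xi)\hat f]$ in $L^2_\xi$ by $\||\xi|\widetilde\Lambda^s_\omega\hat f\|_{L^2_\xi}$. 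Writing $\alpha\cdot\xi = |\xi|(\alpha\cdot\omega_\xi)$ with $\omega_\xi = \xi/|\xi|$ and expanding $\hat f$ in spherical harmonics, this follows from the standard fact that multiplication by $\omega_j$ maps $\mathcal{H}_k$ into $\mathcal{H}_{k-1}\oplus\mathcal{H}_{k+1}$ with uniformly bounded coefficients, combined with $\bra{k\pm 1}^s\simeq\bra{k}^s$.

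For the inhomogeneous estimate \eqref{eq:freediracnh}, the Duhamel splitting together with \eqref{eq:strichartzang2} reduces the problem to
\begin{equation*}
  \|\bra{x}^{\frac12+}|D|\Lambda^s_\omega \Pi_\pm F\|_{L^2_tL^2_x}\lesssim \|\bra{x}^{\frac12+}|D|\Lambda^s_\omega F\|_{L^2_tL^2_x}.
\end{equation*}
Using $|D|\Pi_\pm = \frac12(|D|\pm\mathcal{D})$ and $[|D|,\Lambda^s_\omega]=0$, the nontrivial contribution becomes $\|\bra{x}^{\frac12+}|D|\Lambda^s_\omega(\mathcal{D}/|D|)F\|_{L^2_tL^2_x}$. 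The matrix of Riesz transforms $\mathcal{D}/|D|$ is bounded on $L^2(\bra{x}^{1+}\,dx)$ by Muckenhoupt theory, since $\bra{x}^{1+}\in A_2(\mathbb{R}^3)$ (the exponent being strictly less than $n=3$); compatibility with $\Lambda^s_\omega$ is again handled by the spherical-harmonic shift argument of the previous paragraph, the radial weight commuting with all angular operators. The main obstacle throughout is precisely the noncommutativity of the nonradial projector $\Pi_\pm$ with $\Lambda_\omega$, which is resolved by the observation that $\alpha\cdot\omega_\xi$ shifts the harmonic degree only by $\pm 1$ with bounded amplitude.
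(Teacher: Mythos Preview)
Your argument is correct and follows a route genuinely different from the paper's. Both proofs reduce to the wave estimates via the same half-wave splitting (your $\Pi_\pm$ is equivalent to the paper's $e^{it\mathcal{D}}=\cos(t|D|)+i\sin(t|D|)|D|^{-1}\mathcal{D}$), and both invoke $A_2$ boundedness of Riesz transforms for the weighted inhomogeneous bound; the divergence is in how the noncommutativity of $\Lambda_\omega^s$ with $\mathcal{D}$ is handled. You use the elementary fact that multiplication by $\omega_j$ shifts spherical-harmonic degree by exactly $\pm1$ with bounded coefficients, so that conjugating the Riesz symbol $\alpha\cdot\omega$ by $\Lambda_\omega^s$ introduces only harmless factors $\bra{k\pm1}^s/\bra{k}^s\simeq1$; in the weighted case this is justified by applying the $A_2$ bound for $\mathcal{D}/|D|$ to each harmonic projection $P_kG$ separately (the bound being uniform in $k$), then reassembling with the shifted weights $\bra{k}^{2s}$---a step your write-up leaves implicit but which is what makes the ``shift argument plus radial weight commutes'' reasoning go through. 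The paper instead exploits the Dirac partial-wave decomposition of $L^2(\mathbb{R}^3)^4$ into the two-dimensional invariant blocks $H_{m_j,k_j}$ (from \cite{Thaller92-a}) to construct a modified operator $\widetilde\Lambda_\omega^s$, defined by $\widetilde\Lambda_\omega^s\Phi^\pm_{m_j,k_j}=|k_j|^s\Phi^\pm_{m_j,k_j}$, which commutes \emph{exactly} with $\mathcal{D}$; this reduces the $s>0$ case trivially to $s=0$ together with the norm equivalence $\|\widetilde\Lambda_\omega^s\cdot\|\simeq\|\Lambda_\omega^s\cdot\|$. Your approach is more elementary and requires no Dirac-specific structure beyond $\mathcal{D}^2=-\Delta$, whereas the paper's $\widetilde\Lambda_\omega$ gives a cleaner commutation that it reuses repeatedly downstream (Proposition \ref{pro:smoofreeD}, Theorem \ref{the:strichD}).
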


\begin{proof}%[of ...]
  If $u$ solves the problem
  \begin{equation}\label{pbdir}
  iu_t+\mathcal{D}u=0,\qquad u(0)=f(x),
  \end{equation}
  by applying the operator $(i\partial_t-\mathcal{D})$, we see that
  $u$ solves also
  \begin{equation}\label{pbwave}
  \square u=0,\qquad
  u(0)=f(x),\qquad
  u_t(0)=i\mathcal{D}f.
  \end{equation}
  This gives the representation
  \begin{equation}\label{eq:reprdirac}
    e^{it \mathcal{D}}f=
    \cos(t|D|)f+ i \frac{\sin(t|D|)}{|D|}\mathcal{D}f.
  \end{equation}
  Moreover, we recall that the Riesz operators $|D|^{-1}\partial_{j}$
  are bounded on weighted $L^{2}$ spaces with weight $\bra{x}^{a}$
  for $a<n/2$. Thus in the case $s=0$ estimates
  \eqref{eq:freedirac}, \eqref{eq:freediracnh} are immediate consequences
  of the corresponding estimates for the wave equation proved above.
  
  In order to complete the proof in the case $s>0$,
  we need analyze the structure of the Dirac operator
  $\mathcal{D}$ in greater detail.
  Following \cite{Thaller92-a}, we know that the space
  $L^{2}(\mathbb{R}^{3})^{4}$ is isomorphic to an orthogonal direct sum
  \begin{equation*}
    L^{2}(\mathbb{R}^{3})^{4}\simeq
    \bigoplus_{j=\frac12,\frac32,\dots}^{\infty}
    \bigoplus_{m_{j}=-j}^{j}
    \bigoplus_{k_{j}=\atop\pm(j+1/2)}
    L^{2}(0,+\infty;dr)
    \otimes
    H_{m_{j},k_{j}}.
  \end{equation*}
  Each space $H_{m_{j},k_{j}}$ has dimension two and is
  generated by the orthonormal basis 
  $\{\Phi^{+}_{m_{j},k_{j}},\Phi^{-}_{m_{j},k_{j}}\}$,
  which can be explicitly written in terms of
  spherical harmonics: when $k_{j}=j+1/2$ we have
  \begin{equation*}
    \Phi^{+}_{m_{j},k_{j}}=
     \frac{i}{\sqrt{2j+2}}
    \begin{pmatrix}
      \sqrt{j+1-m_{j}}\ \ Y^{m_{j}-1/2}_{k_{j}} \\
      -\sqrt{j+1+m_{j}}\ \ Y^{m_{j}+1/2}_{k_{j}}\\
      0 \\
      0
    \end{pmatrix}
  \end{equation*}
  \begin{equation*}
    \Phi^{-}_{m_{j},k_{j}}=
     \frac{1}{\sqrt{2j}}
    \begin{pmatrix}
      \sqrt{j+m_{j}}\ \ Y^{m_{j}-1/2}_{k_{j}-1} \\
      \sqrt{j-m_{j}}\ \ Y^{m_{j}+1/2}_{k_{j}-1}\\
      0 \\
      0
    \end{pmatrix}
  \end{equation*}
  while when $k_{j}=-(j+1/2)$ we have
  \begin{equation*}
    \Phi^{+}_{m_{j},k_{j}}=
     \frac{i}{\sqrt{2j}}
    \begin{pmatrix}
      \sqrt{j+m_{j}}\ \ Y^{m_{j}-1/2}_{1-k_{j}} \\
      \sqrt{j-m_{j}}\ \ Y^{m_{j}+1/2}_{1-k_{j}}\\
      0 \\
      0
    \end{pmatrix}
  \end{equation*}
  \begin{equation*}
    \Phi^{-}_{m_{j},k_{j}}=
     \frac{1}{\sqrt{2j+2}}
    \begin{pmatrix}
      \sqrt{j+1-m_{j}}\ \ Y^{m_{j}-1/2}_{-k_{j}} \\
      -\sqrt{j+1+m_{j}}\ \ Y^{m_{j}+1/2}_{-k_{j}}\\
      0 \\
      0
    \end{pmatrix}.
  \end{equation*}
  The isomorphism is expressed by the explicit expansion
  \begin{equation}\label{eq:expans}
    \Psi(x)=\sum 
      \frac1r
      \psi^{+}_{m_{j},k_{j}}(r)
      \Phi^{+}_{m_{j},k_{j}}+
      \frac1r
      \psi^{-}_{m_{j},k_{j}}(r)
      \Phi^{-}_{m_{j},k_{j}}
  \end{equation}
  with
  \begin{equation}\label{eq:L2exp}
    \|\Psi\|^{2}_{L^{2}}=
    \sum\int_{0}^{\infty}[
      |\psi^{+}_{m_{j},k_{j}}|^{2}+|\psi^{-}_{m_{j},k_{j}}|^{2}
    ]dr.
  \end{equation}
  Notice also that
  \begin{equation}\label{eq:L2expS}
    \|\Psi\|^{2}_{L^{2}_{\omega}}=
      \sum
      \frac{1}{r^{2}}
      |\psi^{+}_{m_{j},k_{j}}|^{2}+
      \frac{1}{r^{2}}|
      \psi^{-}_{m_{j},k_{j}}|^{2}.
  \end{equation}
  Each $L^{2}(0,+\infty; dr)\otimes H_{m_{j},k_{j}}$ 
  is an eigenspace of the
  Dirac operator $\mathcal{D}=i^{-1}\sum \alpha_{j}\partial_{j}$
  and the action of $\mathcal{D}$ can be written, 
  in terms of the expansion \eqref{eq:expans}, as
  \begin{equation*}
    \mathcal{D}\Psi=\sum 
      \left(-\frac{d}{dr}\psi^{-}_{m_{j},k_{j}}+
        \frac{k_{j}}{r}\psi^{-}_{m_{j},k_{j}}
      \right)
      \frac{\Phi^{+}_{m_{j},k_{j}}}{r}+
      \left(\frac{d}{dr}\psi^{+}_{m_{j},k_{j}}+
          \frac{k_{j}}{r}\psi^{+}_{m_{j},k_{j}}
      \right)
      \frac{\Phi^{-}_{m_{j},k_{j}}}{r}.
  \end{equation*}
  From decomposition \eqref{eq:expans} it is clear that the operator 
  $\Lambda_{\omega}^{\sigma}$, which acts on spherical harmonics as
  \begin{equation}\label{eq:actLa}
    \Lambda_{\omega}^{\sigma}Y^{m}_{\ell}=
    (1+\ell(\ell+1))^{\frac{\sigma}{2}}\cdot Y^{m}_{\ell},
  \end{equation}
  does not commute with $\mathcal{D}$. Indeed, each space
  $H_{m_{j},k_{j}}$ involves two spherical harmonics $Y^{m}_{\ell}$
  with two values of $\ell$ which differ by 1, and $\mathcal{D}$
  swaps them. However, the modified operator 
  $\widetilde\Lambda_{\omega}^{\sigma}$ defined by
  \begin{equation}\label{eq:actLati}
    \widetilde\Lambda_{\omega}^{\sigma}
    \Phi^{\pm}_{m_{j},k_{j}}=
    |k_{j}|^{\sigma}\Phi^{\pm}_{m_{j},k_{j}}
  \end{equation}
  obviously commutes with $\mathcal{D}$, 
  thus estimates \eqref{eq:freedirac}, 
  \eqref{eq:freediracnh} are trivially true if we replace
  $\Lambda$ with $\widetilde{\Lambda}$. It remains to show that
  we obtain equivalent norms. The equivalence
  \begin{equation*}
    \|\widetilde\Lambda_{\omega}^{\sigma}f\|_{L^{2}_{\omega}}
    \simeq
    \|\Lambda_{\omega}^{\sigma}f\|_{L^{2}_{\omega}}
  \end{equation*}
  follows directly from \eqref{eq:actLa}, \eqref{eq:actLati} and
  \eqref{eq:L2expS}. 
  Moreover, $\widetilde{\Lambda}$
  and $\Lambda$ commute with $\Delta$, hence with $|D|$, and this
  implies
  \begin{equation*}
    \| |D|\Lambda_{\omega}^{s} f\|_{L^{2}}
    \simeq
    \||D|\widetilde 
        \Lambda_{\omega}^{s} f\|_{L^{2}}
  \end{equation*}
  or, equivalently,
  \begin{equation*}
    \|\Lambda_{\omega}^{s} f\|_{\dot H^{1}}
    \simeq
    \|\widetilde 
        \Lambda_{\omega}^{s} f\|_{\dot H^{1}}.
  \end{equation*}
  This is sufficient to prove \eqref{eq:freedirac}.
  Since $\widetilde{\Lambda}$ and $\Lambda$ also commute with
  radial weights we have
  \begin{equation*}
    \| \bra{x}^{\frac{1}{2}+}|D|\Lambda_{\omega}^{s} f\|_{L^{2}}
    \simeq
    \| \bra{x}^{\frac{1}{2}+}|D|\widetilde 
        \Lambda_{\omega}^{s} f\|_{L^{2}}
  \end{equation*}
  which gives \eqref{eq:freediracnh}.
\end{proof}

% section estimates (end)

\section{The wave equations with potential}\label{sec:wWE_pot}  %(fold)

Our next goal is to extend the results of previous section 
to the case of perturbed flows. This will be obtain1ed by a
perturbative argument, relying on the smoothing estimates
of \cite{DanconaFanelli08-a} and the mixed Strichartz-smoothing
estimates of the previous section.
In \cite{DanconaFanelli08-a} smoothing
estimates were proved for several classes of dispersive equations
perturbed with electromagnetic potentials
(while the 1D case was analyzed in \cite{DanconaFanelli06-a}).
For the wave equation in dimension $n\ge3$
the estimates are the following:

\begin{proposition}\label{pro:smooWE}
  Let $n\geq3$. Assume the operator
  $$-\Delta+W(x,D)=-\Delta+a(x)\cdot \nabla+b_{1}(x)+b_{2}(x)$$
  is selfadjoint and its coefficients satisfy
  \begin{equation}\label{eq:ipa}
    |a(x)|\le \frac{\delta}
       {|x|^{1-\epsilon}+|x|^{2}|\log|x||^{\sigma}}
  \end{equation}
  \begin{equation}\label{eq:ipb}
    |b_{1}(x)|\le \frac{\delta}{|x|^{1-\epsilon}+|x|^{2}},\qquad
    0\le b_{2}(x)\le \frac{C}{|x|^{1-\epsilon}+|x|^{2}}
  \end{equation}
  for some $\delta,\epsilon>0$ sufficiently small and
  some $\sigma>1/2$, $C>0$. Moreover assume that
  0 is not a resonance for $-\Delta+b_{2}$.
  Then the following smoothing estimate holds:
   \begin{equation}\label{eq:smooWE}
     \|
     (|x|^{\frac12-\epsilon}+|x|)^{-1}
     e^{it\sqrt{-\Delta+W}}f\|_{L^2L^2}
   \lesssim\|f\|_{L^2}.
  \end{equation}
\end{proposition}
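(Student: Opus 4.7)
The plan is to reduce the smoothing estimate for the perturbed wave flow to a uniform weighted resolvent estimate, via Kato's smoothing theory. Recall that for a selfadjoint operator $H\ge 0$ and weight $A$, the Kato--Yajima equivalence says
\begin{equation*}
  \|A e^{it\sqrt{H}}f\|_{L^{2}_{t}L^{2}_{x}}\lesssim \|f\|_{L^{2}}
  \iff
  \sup_{\lambda\in\mathbb{R},\,\varepsilon>0}
  \|A(H-\lambda^{2}-i\varepsilon)^{-1}A^{*}\|_{L^{2}\to L^{2}}<\infty.
\end{equation*}
With $A=w^{-1}$ and $w(x)=|x|^{\frac12-\epsilon}+|x|$ and $H=-\Delta+W$, the target estimate \eqref{eq:smooWE} is equivalent to uniform boundedness of $w^{-1}(H-z)^{-1}w^{-1}$ on $L^{2}$ as $z$ approaches the real axis. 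So I would spend the proof establishing this limiting absorption principle for $H$.

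First I would record the free resolvent estimate: by the classical Agmon--Kato--Kuroda theory, $w^{-1}(-\Delta-z)^{-1}w^{-1}$ is uniformly bounded on $L^{2}$ and the boundary values $R_{0}(\lambda^{2}\pm i0)$ exist in the weighted operator topology. The particular choice of $w$ with the $|x|^{\frac12-\epsilon}$ behavior near the origin and linear growth at infinity is precisely what is needed to accommodate the singularities of $(-\Delta-z)^{-1}$ in three and higher dimensions while matching the decay rates in \eqref{eq:ipa}--\eqref{eq:ipb}. Next I would treat the \emph{large} nonnegative part $b_{2}$ by itself, setting $H_{0}=-\Delta+b_{2}$. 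The resolvent identity
\begin{equation*}
  R_{H_{0}}(z)=R_{0}(z)-R_{0}(z)\,b_{2}\,R_{H_{0}}(z)
\end{equation*}
reduces the problem to inverting $I+w R_{0}(z)b_{2}w^{-1}$ on $L^{2}$. The decay of $b_{2}$ makes this operator a compact perturbation of the identity, uniformly in $z$, so by Fredholm the inverse exists unless $-1$ is an eigenvalue; the no-resonance assumption at $\lambda=0$ (together with selfadjointness of $H_{0}$ ruling out embedded positive eigenvalues of the nonnegative perturbation by Kato's theorem) excludes exactly this, giving a uniform bound for $w^{-1}R_{H_{0}}(z)w^{-1}$.

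Having controlled $H_{0}$, I would then absorb the small first-order part $a(x)\cdot\nabla$ and zeroth-order part $b_{1}$ perturbatively via
\begin{equation*}
  R_{H}(z)=R_{H_{0}}(z)-R_{H_{0}}(z)\bigl(a\cdot\nabla+b_{1}\bigr)R_{H}(z).
\end{equation*}
The goal is to invert $I+w R_{H_{0}}(z)(a\cdot\nabla+b_{1})w^{-1}$ by Neumann series, which requires the operator norm to be small. For the $b_{1}$ term this is immediate from the smallness of $\delta$ and the weighted resolvent bound for $H_{0}$. For the first-order term, one uses the standard trick of writing $w R_{H_{0}}(z) a\cdot\nabla w^{-1}$ and integrating by parts to transfer the derivative; the slightly stronger weight on $a$ in \eqref{eq:ipa} (the extra $|\log|x||^{\sigma}$ factor) is precisely what compensates for the endpoint nature of the smoothing estimate at the derivative level, and ensures $\|a\cdot\nabla\|$ can be controlled in the weighted norm with a factor of $\delta$.

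The main obstacle is the uniform resolvent bound for $H_{0}$ near the threshold $\lambda=0$, where compactness of the perturbation is delicate and the no-resonance hypothesis is essential; this is where the weight $w$ must be chosen \emph{just right}, since a stronger weight would destroy the compactness of $wR_{0}(0)b_{2}w^{-1}$ at infinity while a weaker one would not handle the local singularity of $R_{0}(0)$. Everything else is a careful Neumann-series bookkeeping using the free smoothing estimate and the pointwise bounds \eqref{eq:ipa}--\eqref{eq:ipb}, together with the Kato--Yajima equivalence to pass from the resolvent estimate back to \eqref{eq:smooWE}.
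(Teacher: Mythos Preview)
The paper does not prove Proposition~\ref{pro:smooWE}: it is simply quoted from \cite{DanconaFanelli08-a}, where smoothing estimates for dispersive equations with electromagnetic potentials are established. So there is no ``paper's own proof'' to compare your attempt against; in this manuscript the proposition functions as a black box.

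That said, your sketch is broadly in the right spirit---uniform weighted resolvent bounds plus a Fredholm/Neumann-series perturbation argument is indeed how such limiting absorption principles are typically proved, and the cited reference proceeds along related lines. One genuine technical slip: the Kato--Yajima equivalence you invoke is stated for the Schr\"odinger-type flow $e^{itH}$, not for the half-wave flow $e^{it\sqrt{H}}$. Smoothness with respect to $\sqrt{H}$ requires uniform bounds on $A\,R_{\sqrt{H}}(\mu)\,A^{*}$, and the identity $R_{\sqrt{H}}(\mu)=(\sqrt{H}+\mu)\,R_{H}(\mu^{2})$ introduces an unbounded factor; the correct route is either to bound $A\,\Im R_{H}(\lambda^{2}\pm i0)\,A^{*}$ with the appropriate $\lambda$-weight coming from the spectral-measure change of variables, or to establish a \emph{supersmoothing} bound of the form $\sup_{z}\|A\,|H|^{1/2}R_{H}(z)\,A^{*}\|<\infty$. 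Your stated equivalence, as written, is not literally correct and would need this adjustment before the rest of your argument could proceed.
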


The assumption that 0 is not a resonance for $-\Delta+b_{2}(x)$ here
means: if $(-\Delta+b_{2})f=0$ and $\bra{x}^{-1}f\in L^{2}$ then $f
\equiv0$.

Combining Proposition \ref{pro:smooWE} with \eqref{eq:strichartz2} we 
obtain the following Strichartz
endpoint estimate for the 3D wave equation perturbed with
an electric potential:

\begin{theorem}\label{the:strichWE}
  Let $n=3$ and consider the Cauchy problem for the wave equation
  $$u_{tt}-\Delta u+V(x)u=F,\qquad
  u(0,x)=f(x),\qquad u_{t}(0,x)=g(x)$$
  under the assumptions:
  \begin{enumerate}\setlength{\itemindent}{-10pt}
    \renewcommand{\labelenumi}{(\roman{enumi})}
    \item $V(x)$ is real valued and
    the positive and negative parts $V_{\pm}$ satisfy
    \begin{equation}\label{eq:assV}
      V_{+}\le \frac{C}{|x|^{\frac12-\epsilon}+|x|^{2}},\qquad
      V_{-}\le \frac{\delta}{|x|^{\frac12-\epsilon}+|x|^{2}}
    \end{equation}
    for some $\delta,\epsilon$ sufficiently small and some $C\ge0$;
    \item $-\Delta+V$ is selfadjoint;
    \item 0 is not a resonance for $-\Delta+V_{-}$.
  \end{enumerate}
  Then the solution $u(t,x)$
  satisfies the endpoint Strichartz estimate
  \begin{equation}\label{eq:endWEV}
    \| u\|_{L^2_tL^\infty_{r}L^2_\omega}
    \lesssim
    \|f\|_{\dot H^{1}}
    +
    \|g\|_{L^{2}}
    +
    \|\bra{x}^{\frac12+}
      F\|_{L^{2}_{t}L^{2}_{x}}.
  \end{equation}
\end{theorem}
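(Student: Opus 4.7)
The plan is to treat the potential term $Vu$ as a source for the \emph{free} wave equation and to close the estimate by bounding the resulting obstruction with the smoothing estimate of Proposition \ref{pro:smooWE} for the perturbed flow. I first write the Duhamel formula relative to the free half-wave propagator:
$$u(t) = \cos(t|D|)f + \frac{\sin(t|D|)}{|D|}\, g + \int_{0}^{t} \frac{\sin((t-s)|D|)}{|D|}\bigl(F(s) - V(x) u(s)\bigr)\,ds.$$
Decomposing the trigonometric propagators as $\tfrac12(e^{it|D|}\pm e^{-it|D|})$ and using that $|D|^{-1}:L^{2}\to\dot H^{1}$, Theorem \ref{the:strichom} in dimension $n=3$ bounds the homogeneous piece by $\|f\|_{\dot H^{1}}+\|g\|_{L^{2}}$. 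For the Duhamel piece, I apply Theorem \ref{the:strichnonom}: inserting $|D|^{-1}$ on the source exactly cancels the factor $|D|^{(n-1)/2}=|D|$ on the right of \eqref{eq:strichartz2}, so that for an arbitrary source $G$,
$$\Bigl\|\int_{0}^{t} \frac{\sin((t-s)|D|)}{|D|} G(s)\,ds\Bigr\|_{L^{2}_{t}L^{\infty}_{r}L^{2}_{\omega}} \lesssim \|\langle x\rangle^{1/2+} G\|_{L^{2}_{t}L^{2}_{x}}.$$
Taking $G=F-Vu$ yields the desired $\|\langle x\rangle^{1/2+}F\|_{L^{2}L^{2}}$ along with an obstruction $\|\langle x\rangle^{1/2+}Vu\|_{L^{2}L^{2}}$.

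Next I convert this obstruction into a smoothing norm of $u$. Hypothesis \eqref{eq:assV} gives the pointwise bound $\langle x\rangle^{1/2+}|V(x)| \lesssim (|x|^{1/2-\epsilon}+|x|)^{-1}$, hence
$$\|\langle x\rangle^{1/2+} Vu\|_{L^{2}L^{2}} \lesssim \bigl\|(|x|^{1/2-\epsilon}+|x|)^{-1} u\bigr\|_{L^{2}L^{2}}.$$
Because only $V_{-}$ is assumed small while $V_{+}$ is unrestricted in size, I cannot absorb this term by smallness. Instead I bound the smoothing norm directly by Proposition \ref{pro:smooWE} applied to $H=-\Delta+V$, with the small part $b_{1}=-V_{-}$ and the non-negative part $b_{2}=V_{+}$ (whose magnitude is irrelevant for the smoothing bound). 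The self-adjointness and non-resonance hypotheses make the functional calculus of $H$ available, so I may represent
$$u(t) = \cos(t\sqrt{H})f + \frac{\sin(t\sqrt{H})}{\sqrt{H}} g + \int_{0}^{t} \frac{\sin((t-s)\sqrt{H})}{\sqrt{H}} F(s)\,ds,$$
and \eqref{eq:smooWE}, together with its dual and a Christ-Kiselev argument for the retarded integral, yields
$$\bigl\|(|x|^{1/2-\epsilon}+|x|)^{-1} u\bigr\|_{L^{2}L^{2}} \lesssim \|f\|_{\dot H^{1}}+\|g\|_{L^{2}}+\|\langle x\rangle^{1/2+} F\|_{L^{2}L^{2}}.$$
Chaining the three bounds produces \eqref{eq:endWEV}.

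The main obstacle is the final step: Proposition \ref{pro:smooWE} is phrased at the $L^{2}$-data level for the half-wave propagator $e^{it\sqrt H}$, whereas I need the smoothing norm of $u$ controlled by energy-level data $(f,g)\in\dot H^{1}\times L^{2}$. To reconcile this, I rely on the norm equivalence $\|\sqrt{H}f\|_{L^{2}}\simeq\|f\|_{\dot H^{1}}$, which follows from \eqref{eq:assV} via a Hardy/Kato argument since $V$ is a small form perturbation of $-\Delta$, and apply the half-wave smoothing to the $L^{2}$-ingredients $\sqrt{H}f$ and $g$ that appear naturally after commuting $\sqrt{H}$ through the trigonometric propagators in the representation above. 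Once this rescaling to the energy level is carried out, the perturbative chain outlined above closes cleanly.
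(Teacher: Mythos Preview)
Your proposal follows essentially the same route as the paper: free Duhamel representation, Theorem~\ref{the:strichom} for the homogeneous piece, Theorem~\ref{the:strichnonom} (with the $|D|^{-1}$ cancelling the $|D|^{(n-1)/2}$) for both inhomogeneous terms, and then the weighted pointwise bound on $V$ together with Proposition~\ref{pro:smooWE} to absorb the $\langle x\rangle^{1/2+}Vu$ obstruction. You are in fact more explicit than the paper about two points it glosses over---the $F$-contribution in the smoothing step (handled, as you say, by duality plus Christ--Kiselev) and the $L^2$-versus-energy data mismatch---where the paper simply records $\|\tau_\epsilon^{-1}u\|_{L^2L^2}\lesssim\|f\|_{L^2}+\||D|^{-1}g\|_{L^2}$ and declares the proof complete.
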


\begin{proof}%[of ...]
  We represent $u(t,x)$ in the form
  \begin{equation*}
    u(t,x)=I+II-III
  \end{equation*}
  where
  \begin{equation*}
    I=\cos(t|D|)f+\frac{\sin(t|D|)}{|D|}g,
  \end{equation*}
  \begin{equation*}
    II= \int_{0}^{t}|D|^{-1}\sin((t-s)|D|)F \,ds,
  \end{equation*}
  and
  \begin{equation*}
    III= \int_{0}^{t}|D|^{-1}\sin((t-s)|D|)Vu \,ds.
  \end{equation*}
  We can use \eqref{eq:strichartz1} to estimate $I$ 
  and \eqref{eq:strichartz2} to estimate $II$ 
  in the norm $L^{2}_{t}L^{\infty}_{r}L^{2}_{\omega}$
  directly. On the other
  hand, applying \eqref{eq:strichartz2} to $III$ we get
  \begin{equation*}
    \|III\|
    _{L^{2}_{t}L^{\infty}_{r}L^{2}_{\omega}}
    \lesssim
    \|\bra{x}^{\frac12+}Vu\|_{L^{2}L^{2}}\le 
    \|\bra{x}^{\frac12+}\tau_{\epsilon}V\|_{L^{\infty}_{x}}
    \|\tau_{\epsilon}^{-1}u\|_{L^{2}L^{2}}.
  \end{equation*}
  By assumption $\bra{x}^{\frac12+}\tau_{\epsilon}V$ is bounded on
  $\mathbb{R}^{n}$, moreover we are allowed to use
  \eqref{eq:smooWE} since $V$ satisfies the assumptions of
  Proposition \ref{pro:smooWE}. Notice that \eqref{eq:smooWE}
  implies
  \begin{equation*}
    \|\tau_{\epsilon}^{-1}u\|_{L^{2}L^{2}}\lesssim
    \|f\|_{L^{2}}+\||D|^{-1}g\|_{L^{2}}
  \end{equation*}
  and in conclusion we have proved
  \begin{equation*}
    \|III\|
    _{L^{2}_{t}L^{\infty}_{r}L^{2}_{\omega}}
    \lesssim
    \|f\|_{L^{2}}+\||D|^{-1}g\|_{L^{2}}
  \end{equation*}
  which completes the proof of \eqref{eq:endWEV}.
\end{proof}

Analogous estimates can be proved for the Klein-Gordon equation,
or in higher dimension $n\ge3$,
for first order perturbations, and for angular derivatives of
the solutions. We omit the details since we prefer to focus
on the Dirac equation here.

% section wave_equations_with_potentials (end)

\section{The Dirac equation with potential}\label{sec:dirac_potential}  %(fold)

We consider now the perturbed Dirac operator $\mathcal{D}+V(x)$
where $V(x)$ is a small $4 \times 4$ hermitian matrix valued
potential. We prove here more general versions of the estimates
given in \cite{DanconaFanelli08-a}, \cite{DanconaFanelli07-a}
in order to include angular regularity. 
We begin with the free Dirac equation:

\begin{proposition}\label{pro:smoofreeD}
  The free Dirac flow satisfies, for all $\sigma>1$ 
  and $s\ge0$, the smoothing estimates (with
  $w_{\sigma}(x)=|x|(1+|\log|x||)^\sigma$)
  \begin{equation}\label{eq:smoofreeD}
    \|w_\sigma^{-1/2}\Lambda^{s}_{\omega}
       e^{it\mathcal{D}}f\|_{L^2_tL^2_x}\lesssim
    \| \Lambda^{s}_{\omega}f\|_{L^2}
  \end{equation}
  and
  \begin{equation}\label{eq:smoofreeDnh}
    \left\|w_\sigma^{-1/2}\Lambda^{s}_{\omega}\int_{0}^{t}
      e^{i(t-t')\mathcal{D}}F(t')dt'\right\|_{L^2_tL^2_x}\lesssim
    \| w_\sigma^{1/2}\Lambda^{s}_{\omega}F\|_{L^2_{t}L^{2}_{x}}
  \end{equation}
\end{proposition}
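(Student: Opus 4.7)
The plan is to reduce everything to the scalar case $s=0$ and then invoke classical Kato-type smoothing for the half-wave propagator. The mechanism that allows us to commute angular regularity through is the modified angular operator $\widetilde{\Lambda}_\omega^s$ introduced in the proof of Corollary \ref{cor:freedirac}, which, unlike $\Lambda_\omega^s$, genuinely commutes with $\mathcal{D}$.

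First I would note three facts already established (or immediate): (a) the norm equivalence $\|\Lambda_\omega^s g\|_{L^2(\mathbb{R}^3)}\simeq\|\widetilde{\Lambda}_\omega^s g\|_{L^2(\mathbb{R}^3)}$, following from \eqref{eq:actLa}, \eqref{eq:actLati} and \eqref{eq:L2expS}; (b) $\widetilde{\Lambda}_\omega^s$ commutes with $\mathcal{D}$, and also with $|D|$ and with any multiplication by a radial weight $w(|x|)$, since its action on each $H_{m_j,k_j}$ is a scalar multiple of the identity; (c) consequently $\widetilde{\Lambda}_\omega^s$ commutes with $e^{it\mathcal{D}}$ and with $w_\sigma^{\pm 1/2}$. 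Using (a) and (c),
\[
  \|w_\sigma^{-1/2}\Lambda_\omega^s e^{it\mathcal{D}}f\|_{L^2_t L^2_x}
  \simeq
  \|w_\sigma^{-1/2}e^{it\mathcal{D}}\widetilde{\Lambda}_\omega^s f\|_{L^2_t L^2_x},
\]
and similarly on the right-hand side, so the estimate \eqref{eq:smoofreeD} reduces to its $s=0$ case applied to the datum $\widetilde{\Lambda}_\omega^s f$.

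Next, I would exploit the identity \eqref{eq:reprdirac},
\[
  e^{it\mathcal{D}}g=\cos(t|D|)g+i\tfrac{\sin(t|D|)}{|D|}\mathcal{D}g,
\]
to reduce the $s=0$ case of \eqref{eq:smoofreeD} to a scalar smoothing estimate for the half-wave propagator $e^{\pm it|D|}$. For the $\cos(t|D|)$ piece we apply this directly; for the $\sin(t|D|)|D|^{-1}\mathcal{D}$ piece, note that $|D|^{-1}\mathcal{D}$ is a matrix of Riesz transforms and is therefore bounded on $L^2$, so it suffices to have the half-wave smoothing estimate
\[
  \|w_\sigma^{-1/2}e^{\pm it|D|}h\|_{L^2_t L^2_x}\lesssim\|h\|_{L^2(\mathbb{R}^3)},\qquad \sigma>1.
\]
This is precisely a classical Kato/Agmon--Kato--Kuroda smoothing estimate with the sharp logarithmic weight; it is, in fact, the free ($V=0$) case of the smoothing estimates proved in \cite{DanconaFanelli08-a}, which is cited as the source of the perturbative smoothing in the introduction to this section. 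The key point is that $w_\sigma(x)=|x|(1+|\log|x||)^\sigma$ with $\sigma>1$ is an admissible Kato weight for the resolvent of $\sqrt{-\Delta}$ on $\mathbb{R}^3$.

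For the inhomogeneous estimate \eqref{eq:smoofreeDnh} I would argue by duality plus a Christ--Kiselev truncation. After the same reduction via $\widetilde{\Lambda}_\omega^s$, the homogeneous estimate $\|w_\sigma^{-1/2}e^{it\mathcal{D}}f\|_{L^2 L^2}\lesssim\|f\|_{L^2}$ is equivalent, by $TT^*$, to
\[
  \Bigl\|\int_{-\infty}^{+\infty}e^{-it\mathcal{D}}w_\sigma^{-1/2}G(t)\,dt\Bigr\|_{L^2_x}
  \lesssim\|G\|_{L^2_t L^2_x},
\]
which in turn gives the untruncated bound
\[
  \Bigl\|w_\sigma^{-1/2}\int_{-\infty}^{+\infty}e^{i(t-t')\mathcal{D}}F(t')\,dt'\Bigr\|_{L^2_t L^2_x}
  \lesssim\|w_\sigma^{1/2}F\|_{L^2_t L^2_x}.
\]
The retarded integral $\int_0^t$ is then handled by the Christ--Kiselev lemma, which applies because the target and source time-indices $(2,2)$ satisfy $2>2$ is false, so some care is needed; the standard workaround is to exploit the self-adjointness of $\mathcal{D}$ and the symmetry of the weight to absorb the $1_{t'<t}$ cutoff into an equivalent bilinear form (as done, e.g., in \cite{DanconaFanelli08-a}). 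I expect the only real subtlety here to be this Christ--Kiselev step at the endpoint exponent; everything else is bookkeeping around the commutation properties of $\widetilde{\Lambda}_\omega$.
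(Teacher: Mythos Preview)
Your reduction of the case $s>0$ to $s=0$ via the modified operator $\widetilde\Lambda_\omega^s$ is exactly what the paper does, and your derivation of the homogeneous $s=0$ estimate from half-wave smoothing through \eqref{eq:reprdirac} is a legitimate alternative to the paper's route (the paper instead quotes a uniform resolvent bound for $\mathcal{D}$ from \cite{DanconaFanelli07-a} and invokes Kato's theory directly).

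The gap is in your treatment of the inhomogeneous estimate \eqref{eq:smoofreeDnh}. You correctly observe that Christ--Kiselev does \emph{not} apply when both time exponents equal $2$, and your proposed ``workaround'' is not an argument: there is no general mechanism by which self-adjointness and weight symmetry let you absorb the cutoff $\one{t'<t}$ into a bilinear form at the $L^2_t$--$L^2_t$ endpoint. Indeed, $TT^*$ plus Christ--Kiselev is simply the wrong tool here. The paper bypasses this entirely: Kato's smoothing theory shows that once one has the uniform resolvent bound
\[
  \sup_{z\notin\mathbb{R}}\|w_\sigma^{-1/2}R_{\mathcal{D}}(z)w_\sigma^{-1/2}\|_{L^2\to L^2}<\infty,
\]
both the homogeneous estimate \emph{and} the retarded inhomogeneous estimate
\[
  \Bigl\|w_\sigma^{-1/2}\int_0^t e^{i(t-t')\mathcal{D}}w_\sigma^{-1/2}G(t')\,dt'\Bigr\|_{L^2_tL^2_x}\lesssim\|G\|_{L^2_tL^2_x}
\]
follow directly (this is Kato's original theorem; see \cite{DanconaFanelli07-a}, \cite{DanconaFanelli08-a}). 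The truncation to $[0,t]$ is built into Kato's argument through the limiting absorption principle and does not require any Christ--Kiselev step. So to close your proof you should replace the $TT^*$/Christ--Kiselev paragraph by an appeal to Kato smoothing via the resolvent estimate for $\mathcal{D}$ with the weight $w_\sigma$.
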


\begin{proof}%[of ...]
  When $s=0$, both estimates follow from the resolvent estimate
  \begin{equation*}
    \|w_\sigma^{-1/2}R_{\mathcal{D}}(z)f\|_{L^{2}(\mathbb{R}^{3})}
    \le C\|w_\sigma^{1/2}f\|_{L^{2}(\mathbb{R}^{3})},\qquad
    z\not\in \mathbb{R},
  \end{equation*}
  with a constant uniform in $z$, proved in
  \cite{DanconaFanelli07-a}
  using a standard application of Kato's
  theory (see also \cite{DanconaFanelli08-a}). The case $s>0$
  is proved exactly as in Corollary \ref{cor:freedirac}, first
  by replacing $\Lambda_{\omega}$ with $\widetilde{\Lambda}_{\omega}$
  which commutes with the flow,
  and then by using the equivalence of norms.
\end{proof}

We consider now the case of a perturbed Dirac system
\begin{equation*}
  iu_{t}=\mathcal{D}u+Vu
\end{equation*}
where $V(x)$ is a $4 \times 4$ matrix potential. If $V$ is
hermitian and its weak $L^{3,\infty}$ norm is small enough, the
operator $\mathcal{D}+V$ is selfadjoint as proved in
\cite{DanconaFanelli07-a}. In all of the following results the
assumptions on the potential are somewhat stronger than this,
so in all cases the unitary flow 
$e^{it(\mathcal{D}+V)}$ will be well defined and continuous on
$L^{2}(\mathbb{R}^{3})^{4}$ by spectral theory.

\begin{proposition}\label{pro:smooD}
  Let $V(x)$ be a hermitian $4\times4$ matrix on
  $\mathbb{R}^{3}$ such that
  \begin{equation}\label{eq:Vhp}
    |V(x)|\leq\frac{\delta}{w_\sigma(|x|)},\qquad
    w_{\sigma}(r)=r \cdot(1+|\log r|)^\sigma
  \end{equation}
  for some $\delta>0$ sufficiently small and some $\sigma>1$.
  Then the perturbed Dirac flow $e^{it(\mathcal{D}+V)}$
  satisfies the smoothing estimates
  \begin{equation}\label{eq:smoothdir}
    \|w_\sigma^{-1/2}e^{it(\mathcal{D}+V)}f\|_{L^2_tL^2_x}\lesssim
    \| f\|_{L^2},
  \end{equation}
  \begin{equation}\label{eq:smoothdirnh}
    \left\|w_\sigma^{-1/2}\int_{0}^{t}
        e^{i(t-t')(\mathcal{D}+V)}F(t')dt'\right\|_{L^2_tL^2_x}\lesssim
    \| w_\sigma^{1/2}F\|_{L^{2}_{t}L^2_{x}}.
  \end{equation}
  % If in addition $V=V(|x|)$ is a radial potential,
  % we have for all $s\ge0$ the estimates with angular
  % regularity
%%%%%%%%%%%%%%%%%%  
  If in addition $V$ satisfies for some $s>1$
  the condition
  \begin{equation}\label{eq:angV}
    \|\Lambda_{\omega}^{s}
         V(r\ \cdot\ )\|_{L^{2}(\mathbb{S}^{2})}
    \le \frac{\delta}{w_{\sigma}(r)},
  \end{equation}
  then we have, for all $0\le s\le 2$, the estimates with angular
  regularity
%%%%%%%%%%%%%%%%%%  
  \begin{equation}\label{eq:smoothdirang}
    \|w_\sigma^{-1/2}\Lambda^{s}_{\omega}
       e^{it(\mathcal{D}+V)}f\|_{L^2_tL^2_x}\lesssim
    \| \Lambda^{s}_{\omega}f\|_{L^2},
  \end{equation}
  \begin{equation}\label{eq:smoothdirangnh}
    \left\|w_\sigma^{-1/2}\Lambda^{s}_{\omega}\int_{0}^{t}
        e^{i(t-t')(\mathcal{D}+V)}F(t')dt'\right\|_{L^2_tL^2_x}\lesssim
    \| w_\sigma^{1/2}\Lambda^{s}_{\omega}F\|_{L^{2}_{t}L^2_{x}}.
  \end{equation}
\end{proposition}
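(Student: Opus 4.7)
The strategy is a perturbative one based on Proposition \ref{pro:smoofreeD}: treat $Vu$ as a forcing term for the free Dirac flow, apply the free Dirac smoothing estimates, and absorb the resulting perturbation using the smallness of $\delta$.

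Set $u(t)=e^{it(\mathcal{D}+V)}f$. Duhamel's principle gives
\begin{equation*}
  u(t)=e^{it\mathcal{D}}f-i\int_{0}^{t}e^{i(t-s)\mathcal{D}}V u(s)\,ds.
\end{equation*}
For the unweighted estimate \eqref{eq:smoothdir} I apply $w_{\sigma}^{-1/2}$ and take the $L^{2}_{t}L^{2}_{x}$ norm: the first term is bounded by $\|f\|_{L^{2}}$ via \eqref{eq:smoofreeD} with $s=0$, while \eqref{eq:smoofreeDnh} controls the second by $\|w_{\sigma}^{1/2}V u\|_{L^{2}_{t}L^{2}_{x}}$. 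The hypothesis $|V(x)|\le\delta/w_{\sigma}(|x|)$ gives the pointwise bound $|w_{\sigma}^{1/2}V u|\le\delta\,w_{\sigma}^{-1/2}|u|$, so that
\begin{equation*}
  \|w_{\sigma}^{1/2}V u\|_{L^{2}_{t}L^{2}_{x}}\le\delta\,\|w_{\sigma}^{-1/2}u\|_{L^{2}_{t}L^{2}_{x}}.
\end{equation*}
For $\delta$ small enough this term is absorbed into the left-hand side, yielding \eqref{eq:smoothdir}. The inhomogeneous version \eqref{eq:smoothdirnh} follows by the same Duhamel expansion applied to the retarded integral, using \eqref{eq:smoofreeDnh} twice.

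For the angularly regular estimates \eqref{eq:smoothdirang}--\eqref{eq:smoothdirangnh} with $s>1$, I apply $\Lambda^{s}_{\omega}$ to the Duhamel identity. No commutation between $\Lambda^{s}_{\omega}$ and $\mathcal{D}$ is needed, because the angularly regular estimates of Proposition \ref{pro:smoofreeD} are already stated with $\Lambda^{s}_{\omega}$ positioned directly on the flow. Applying them reduces the matter to controlling $\|w_{\sigma}^{1/2}\Lambda^{s}_{\omega}(Vu)\|_{L^{2}_{t}L^{2}_{x}}$. At fixed $(t,r)$, the Sobolev algebra property of $H^{s}(\mathbb{S}^{2})$, valid for $s>\frac{n-1}{2}=1$, combined with hypothesis \eqref{eq:angV}, gives
\begin{equation*}
  \|\Lambda^{s}_{\omega}(Vu)(r\,\cdot)\|_{L^{2}_{\omega}}\lesssim\|\Lambda^{s}_{\omega}V(r\,\cdot)\|_{L^{2}_{\omega}}\|\Lambda^{s}_{\omega}u(r\,\cdot)\|_{L^{2}_{\omega}}\le\frac{\delta}{w_{\sigma}(r)}\|\Lambda^{s}_{\omega}u(r\,\cdot)\|_{L^{2}_{\omega}}.
\end{equation*}
Multiplying by $w_{\sigma}(r)$, integrating $r^{2}dr$ and then in $t$ yields
\begin{equation*}
  \|w_{\sigma}^{1/2}\Lambda^{s}_{\omega}(Vu)\|_{L^{2}_{t}L^{2}_{x}}\le\delta\,\|w_{\sigma}^{-1/2}\Lambda^{s}_{\omega}u\|_{L^{2}_{t}L^{2}_{x}},
\end{equation*}
which is again absorbed for $\delta$ small, proving \eqref{eq:smoothdirang}. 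The remaining range $0\le s\le1$ can be covered by interpolation between this case and the case $s=0$ proved above, and the inhomogeneous estimate \eqref{eq:smoothdirangnh} is handled by the identical Duhamel/absorption argument.

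The only non-routine step is the multiplicative estimate on the sphere, and this step is precisely what forces the constraint $s>1$ on the angular regularity of $V$, corresponding to the Sobolev algebra property on $\mathbb{S}^{2}$. As noted in Remark \ref{rem:generality}, replacing the algebra inequality by a Moser-type estimate on the sphere would permit the full range $s>0$ at the cost of additional technicalities.
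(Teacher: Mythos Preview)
Your proof is correct and follows essentially the same route as the paper: Duhamel's formula, the free smoothing estimates of Proposition~\ref{pro:smoofreeD}, the pointwise bound on $V$ (resp.\ the $H^{s}(\mathbb{S}^{2})$ algebra property \eqref{eq:prodest} for $s>1$) to control the perturbative term, and absorption by smallness of $\delta$. Your explicit mention of interpolation to cover $0\le s\le 1$ is a detail the paper leaves implicit.
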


\begin{proof}%[of ...]
  If $u$ solves
  \begin{equation*}
    iu_{t}=\mathcal{D}u+Vu+F,\qquad u(0)=f
  \end{equation*}
  we can write
  \begin{equation*}
    u=
    e^{it\mathcal{D}}f+i
    \int_{0}^{t}e^{i(t-t')\mathcal{D}}[Vu(t')+F(t')]dt'.
  \end{equation*}
  Using \eqref{eq:smoofreeD}, \eqref{eq:smoofreeDnh} with $s=0$
  and assumption \eqref{eq:Vhp} we get
  \begin{equation*}
  \begin{split}
    \|w_\sigma^{-1/2}u\|_{L^2_tL^2_x}
    &\lesssim
    \| f\|_{L^2}+\|w_{\sigma}^{1/2}[Vu+F]\|_{L^{2}L^{2}}\le 
    \\
    &\le\| f\|_{L^2}+\delta \|w_{\sigma}^{-1/2}u\|_{L^{2}L^{2}}
    +\|w_{\sigma}^{1/2}F\|_{L^{2}L^{2}}.
  \end{split}
  \end{equation*}
  If $\delta$ is sufficiently small this implies 
  both \eqref{eq:smoothdir} and \eqref{eq:smoothdirnh}.
  
  To prove \eqref{eq:smoothdirang}, \eqref{eq:smoothdirangnh}
  we proceed in a similar way using again 
  \eqref{eq:smoofreeD} and \eqref{eq:smoofreeDnh}:
  \begin{equation*}
    \|w_\sigma^{-1/2}\Lambda^{s}_{\omega}
      u\|_{L^2_tL^2_x}\lesssim
    \| \Lambda^{s}_{\omega}f\|_{L^2}+
    \|w_{\sigma}^{1/2}\Lambda^{s}_{\omega}(Vu+F)\|_{L^{2}L^{2}}.
  \end{equation*}
  We shall need the following fairly elementary product
  estimate involving the
  angular derivative operator $\Lambda_{\omega}$
  \begin{equation}\label{eq:prodest}
    \|\Lambda^{s}_{\omega}(gh)\|_{L^{2}_{\omega}(\mathbb{S}^{2})}
    \lesssim
    \|\Lambda^{s}_{\omega}g\|_{L^{2}_{\omega}(\mathbb{S}^{2})}
    \|\Lambda^{s}_{\omega}h\|_{L^{2}_{\omega}(\mathbb{S}^{2})}
  \end{equation}
  which holds provided $s>1$. This estimate can be proved 
  e.g.~by localizing the norm on the sphere via a finite partition
  of unity, and then applying in each coordinate patch
  a standard product estimate
  in the Sobolev space $H^{s}(\mathbb{R}^{2})$, $s>1$.
  
  Applying \eqref{eq:prodest}, and using assumption
  \eqref{eq:angV}, we have
  \begin{equation*}
    \|w_{\sigma}^{1/2}\Lambda^{s}_{\omega}(Vu+F)\|_{L^{2}L^{2}}\le
    \delta\|w_{\sigma}^{-1/2}\Lambda^{s}_{\omega}u\|_{L^{2}L^{2}}+
    \|w_{\sigma}^{1/2}\Lambda^{s}_{\omega}F\|_{L^{2}L^{2}}
  \end{equation*}
  and the proof is concluded as above.
\end{proof}

We note the following consequence of \eqref{eq:smoothdir}:

\begin{corollary}\label{cor:nablau}
  Assume that the hermitian matrix $V(x)$ satisfies,
  for $\delta$ sufficiently small, $C$ arbitrary and $\sigma>1$
  (with $w_{\sigma}(r)=r(1+|\log r|)^\sigma$)
  \begin{equation}\label{eq:assnablaV}
    |V(x)|\leq\frac{\delta}{w_\sigma(|x|)},\qquad
    |\nabla V(x)|\leq\frac{C}{w_\sigma(|x|)}.
  \end{equation}
  Then besides \eqref{eq:smoothdir} we have the
  estimate for the derivatives of the flow
  \begin{equation}\label{eq:smoonablau}
    \|w_\sigma^{-1/2}
       \nabla
       e^{it(\mathcal{D}+V)}f\|_{L^2_tL^2_x}\lesssim
    \| f\|_{H^{1}}.
  \end{equation}
  % If in addition we assume that $V=V(|x|)$ is a radial
  % potential, we have for all $s\ge0$
  % the estimate with angular regularity
  If in addition we assume that, for some $s>1$,
  \begin{equation}\label{eq:nablaangV}
    \|\Lambda_{\omega}^{s} V(r \cdot)\|_{L^{2}(\mathbb{S}^{2})}
    \le \frac{\delta}{w_{\sigma}(r)},
    \qquad
    \|\Lambda_{\omega}^{s}\nabla V(r \cdot)\|_{L^{2}(\mathbb{S}^{2})}
    \le \frac{C}{w_{\sigma}(r)},
  \end{equation}
  then we have the following estimate with angular regularity
  \begin{equation}\label{eq:smoonablauang}
    \|w_\sigma^{-1/2}
       \nabla \Lambda^{s}_{\omega}
       e^{it(\mathcal{D}+V)}f\|_{L^2_tL^2_x}\lesssim
    \| \Lambda^{s}_{\omega}f\|_{H^{1}}.
  \end{equation}
  
\end{corollary}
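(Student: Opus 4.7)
The plan is to apply $\nabla$ directly to the equation $iu_t=(\mathcal{D}+V)u$ for $u(t)=e^{it(\mathcal{D}+V)}f$, and treat the resulting source term $(\nabla V)u$ as a perturbation via Duhamel. Since $\mathcal{D}$ has constant coefficients, it commutes with $\nabla$, so $w:=\nabla u$ satisfies
$$iw_t=\mathcal{D}w+Vw+(\nabla V)u,\qquad w(0)=\nabla f,$$
and hence
$$\nabla u=e^{it(\mathcal{D}+V)}(\nabla f)-i\int_0^t e^{i(t-t')(\mathcal{D}+V)}(\nabla V)u(t')\,dt'.$$

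For \eqref{eq:smoonablau} I would apply \eqref{eq:smoothdir} and \eqref{eq:smoothdirnh} to the two Duhamel pieces, obtaining $\|w_\sigma^{-1/2}\nabla u\|_{L^2L^2}\lesssim\|\nabla f\|_{L^2}+\|w_\sigma^{1/2}(\nabla V)u\|_{L^2L^2}$. The pointwise bound $|\nabla V|\le C/w_\sigma$ from \eqref{eq:assnablaV} converts the source term into $C\|w_\sigma^{-1/2}u\|_{L^2L^2}$, which is itself $\lesssim\|f\|_{L^2}$ by another application of \eqref{eq:smoothdir} to $u$. Summing yields $\|w_\sigma^{-1/2}\nabla u\|_{L^2L^2}\lesssim\|f\|_{H^1}$.

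For \eqref{eq:smoonablauang} I would run the same Duhamel argument but invoke instead the angular estimates \eqref{eq:smoothdirang} and \eqref{eq:smoothdirangnh}, producing
$$\|w_\sigma^{-1/2}\Lambda_\omega^s\nabla u\|_{L^2L^2}\lesssim\|\Lambda_\omega^s\nabla f\|_{L^2}+\|w_\sigma^{1/2}\Lambda_\omega^s[(\nabla V)u]\|_{L^2L^2}.$$
The source term is handled by the spherical product estimate \eqref{eq:prodest} (which requires $s>1$) together with the angular bound $\|\Lambda_\omega^s\nabla V(r\,\cdot\,)\|_{L^2_\omega}\le C/w_\sigma(r)$ from \eqref{eq:nablaangV}: this gives $\|\Lambda_\omega^s[(\nabla V)u](r\,\cdot\,)\|_{L^2_\omega}\lesssim (C/w_\sigma(r))\|\Lambda_\omega^s u(r\,\cdot\,)\|_{L^2_\omega}$ and therefore $\|w_\sigma^{1/2}\Lambda_\omega^s[(\nabla V)u]\|_{L^2L^2}\lesssim\|w_\sigma^{-1/2}\Lambda_\omega^s u\|_{L^2L^2}\lesssim\|\Lambda_\omega^s f\|_{L^2}$ after a last use of \eqref{eq:smoothdirang}.

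The main obstacle is passing from $\|w_\sigma^{-1/2}\Lambda_\omega^s\nabla u\|$ to the desired $\|w_\sigma^{-1/2}\nabla\Lambda_\omega^s u\|$, since $\nabla$ and $\Lambda_\omega^s$ do not literally commute. This equivalence should follow from a spherical-harmonic expansion together with \eqref{eq:equiv3}: multiplication by $\omega=x/|x|$ is a degree-one polynomial, so $\nabla$ mixes harmonics of adjacent degrees only, and since $\bra{k\pm 1}^s\simeq\bra{k}^s$ uniformly in $k$ both weighted norms reduce to the same sum. An analogous identification gives $\|\Lambda_\omega^s\nabla f\|_{L^2}\simeq\|\nabla\Lambda_\omega^s f\|_{L^2}$ on the data side, completing the bound by $\|\Lambda_\omega^s f\|_{H^1}$. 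The rest of the argument is a routine Duhamel-plus-smoothing iteration and does not involve new ideas beyond those already developed for Proposition~\ref{pro:smooD}.
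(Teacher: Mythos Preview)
Your argument for \eqref{eq:smoonablau} is exactly the paper's: differentiate the equation, write Duhamel for $\partial_j u$ with the perturbed propagator, and close using \eqref{eq:smoothdir}--\eqref{eq:smoothdirnh} together with $|\nabla V|\le C/w_\sigma$.

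For \eqref{eq:smoonablauang} your route diverges from the paper's, though both are valid. The paper does \emph{not} rerun the $\nabla$-Duhamel with the perturbed flow; instead it applies $|D|$ to the equation, exploiting that $|D|$ commutes with both $\mathcal{D}$ and $\Lambda_\omega$, and obtains a \emph{free} Dirac equation for $|D|u$ with source $|D|(Vu)$. Free-flow angular smoothing then gives
\[
\|w_\sigma^{-1/2}|D|\Lambda_\omega^s u\|_{L^2L^2}\lesssim\|\Lambda_\omega^s f\|_{\dot H^1}+\|w_\sigma^{1/2}|D|\Lambda_\omega^s(Vu)\|_{L^2L^2},
\]
after which $|D|$ is traded for $\nabla$ (Riesz transforms on the weighted space) and the Leibniz rule plus \eqref{eq:prodest} finish the job. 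Your approach keeps the structure of part one: same $\nabla$-Duhamel, same perturbed propagator, just with \eqref{eq:smoothdirang}--\eqref{eq:smoothdirangnh} in place of \eqref{eq:smoothdir}--\eqref{eq:smoothdirnh}. This is arguably more economical (the source is simply $(\nabla V)u$, no unpacking of $|D|(Vu)$ needed), but it forces the commutation issue $\|w_\sigma^{-1/2}\Lambda_\omega^s\nabla u\|\simeq\|w_\sigma^{-1/2}\nabla\Lambda_\omega^s u\|$ onto the left-hand side rather than burying it in the source term. In fact the paper's argument quietly uses the same equivalence on the right-hand side when it writes $\|w_\sigma^{1/2}|D|\Lambda_\omega^s(Vu)\|\lesssim\|w_\sigma^{1/2}\Lambda_\omega^s(\nabla V)u\|+\|w_\sigma^{1/2}\Lambda_\omega^s V\nabla u\|$, so your explicit discussion of why the equivalence holds (adjacent-degree mixing, $\bra{k\pm1}^s\simeq\bra{k}^s$, cf.\ \eqref{eq:equiv3}) is not superfluous---it fills in a step both arguments need.
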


\begin{proof}%[of ...]
  Assume at first $s=0$ and let $u=e^{it(\mathcal{D}+V)}f$.
  Each derivative $u_{j}=\partial_{j}u$ satisfies an equation like
  \begin{equation*}
    i\partial_{t}u_{j}=\mathcal{D}u_{j}+Vu_{j}+V_{j}u,\qquad
    V_{j}=\partial_{j}V,\qquad
    u_{j}(0,x)=f_{j}=\partial_{j}f
  \end{equation*}
  so we can represent it in the form
  \begin{equation*}
    u_{j}=e^{it(\mathcal{D}+V)}f_{j}+
      i \int_{0}^{t}e^{i(t-s)(\mathcal{D}+V)}V_{j}uds.
  \end{equation*}
  To the first term at the r.h.s.~we can apply estimate 
  \eqref{eq:smoothdir} obtaining
  \begin{equation*}
    \|w_\sigma^{-1/2}e^{it(\mathcal{D}+V)}f_{j}\|_{L^2_tL^2_x}
    \lesssim\|f\|_{\dot H^{1}}. 
  \end{equation*}
  To handle the second term we use \eqref{eq:smoothdirnh}:
  \begin{equation*}
     \left\|w_\sigma^{-1/2}
        \int e^{i(t-s)(\mathcal{D}+V)}V_{j}u
      \right\|_{L^{2}}\lesssim
      \|w_\sigma^{1/2}V_{j}u\|_{L^2_tL^2_x}\le 
      \|w_{\sigma}V_{j}\|_{L^{\infty}_{x}}
      \|w_\sigma^{-1/2}u\|_{L^2_tL^2_x}
  \end{equation*}
  and again by \eqref{eq:smoothdir} and by the assumption on
  $\nabla V$ we conclude the proof of \eqref{eq:smoonablau}.
  
  For the proof of \eqref{eq:smoonablauang} we apply to the
  equation for $u$ the operator $|D|$
  which commutes with $\mathcal{D}$:
  \begin{equation*}
    i \partial_{t}(|D|u)=
    \mathcal{D}(|D|u)+
    |D|(Vu)
  \end{equation*}
  and we use estimates \eqref{eq:smoothdirang}, 
  \eqref{eq:smoothdirangnh}, obtaining
  \begin{equation*}
    \|w_{\sigma}^{-1/2}|D|\Lambda^{s}_{w}u\|_{L^{2}L^{2}}\lesssim
    \|\Lambda^{s}_{w}f\|_{\dot H^{1}}+
    \|w_{\sigma}^{1/2}|D|\Lambda^{s}_{w}(Vu)\|_{L^{2}L^{2}}.
  \end{equation*}
  Now in the last term we commute $|D|$ with $\Lambda$ and
  we notice that we can replace $|D|$ by $\nabla$ obtaining
  an equivalent norm. This gives
  \begin{equation*}
    \|w_{\sigma}^{1/2}|D|\Lambda^{s}_{w}(Vu)\|_{L^{2}L^{2}}\le 
    \|w_{\sigma}^{1/2}\Lambda^{s}_{w}(\nabla V)u)\|_{L^{2}L^{2}}+
    \|w_{\sigma}^{1/2}\Lambda^{s}_{w}V(\nabla u)\|_{L^{2}L^{2}}.
  \end{equation*}
  We can now apply the product estimate \eqref{eq:prodest} and 
  assumptions \eqref{eq:nablaangV}; proceeding as in the
  first part of the proof we finally obtain \eqref{eq:smoonablauang}.
\end{proof}

By a similar perturbative argument, 
we obtain the endpoint Strichartz estimates for the Dirac
equation with potential. Notice that in the version of this
Theorem given in the Introduction
(Theorem \ref{the:strichD-i}) we used an equivalent formulation
in terms of the potential
$v_{\sigma}(x)=|x|^{\frac12}|\log|x||^{\sigma}+\bra{x}^{1+\sigma}$.

\begin{theorem}\label{the:strichD}
  Assume that the hermitian matrix $V(x)$ satisfies,
  for $\delta$ sufficiently small, $C$ arbitrary and $\sigma>1$
  (with $w_{\sigma}(r)=r(1+|\log r|)^\sigma$)
  \begin{equation}\label{eq:assnablaV2}
    |V(x)|\leq\frac{\delta}
         {\bra{x}^{\frac12+}w_{\sigma}(|x|)^{\frac12}},\qquad
    |\nabla V(x)|\leq\frac{C}
         {\bra{x}^{\frac12+}w_{\sigma}(|x|)^{\frac12}}.    
  \end{equation}
  Then the perturbed Dirac flow
  satisfies the endpoint Strichartz estimate
  \begin{equation}\label{eq:enddiracV}
    \|e^{it(\mathcal{D}+V)}f\|_{L^2_t L^\infty_{r}L^2_\omega}
    \lesssim
    \| f\|_{H^1}.
  \end{equation}
  % If in addition $V=V(|x|)$ is a radial potential,
  % we have for all $s\ge0$ the estimate with
  % angular regularity
  If instead we make the following
  assumption (which implies \eqref{eq:assnablaV2}):
  for some $s>1$,
  \begin{equation}\label{eq:nablaangV2}
    \|\Lambda_{\omega}^{s} V(r \cdot)\|_{L^{2}(\mathbb{S}^{2})}
    \le \frac{\delta}{\bra{r}^{\frac12+}w_{\sigma}(r)^{\frac12}},
    \qquad
    \|\Lambda_{\omega}^{s}\nabla V(r \cdot)\|_{L^{2}(\mathbb{S}^{2})}
    \le \frac{C}{\bra{r}^{\frac12+}w_{\sigma}(r)^{\frac12}},
  \end{equation}
  then we have the endpoint estimate with angular regularity
  \begin{equation}\label{eq:enddiracVang}
    \|\Lambda^{s}_{\omega}
      e^{it(\mathcal{D}+V)}f\|_{L^2_t L^\infty_{r}L^2_\omega}
    \lesssim
    \|\Lambda^{s}_{\omega} f\|_{H^1}
  \end{equation}
  and the energy estimate with angular regularity
  \begin{equation}\label{eq:energyang}
    \|\Lambda^{s}_{\omega}
      e^{it(\mathcal{D}+V)}f\|_{L^\infty_t H^{1}}
    \lesssim
    \|\Lambda^{s}_{\omega} f\|_{H^1}
  \end{equation}
\end{theorem}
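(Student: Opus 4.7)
The plan is to mirror the perturbative scheme of Theorem \ref{the:strichWE}. Write Duhamel with respect to the free Dirac flow:
\begin{equation*}
u(t)=e^{it(\mathcal{D}+V)}f=e^{it\mathcal{D}}f-i\int_0^t e^{i(t-s)\mathcal{D}}Vu(s)\,ds.
\end{equation*}
For \eqref{eq:enddiracV}, apply the free endpoint \eqref{eq:freedirac} to the first term and the inhomogeneous endpoint \eqref{eq:freediracnh} to the second, reducing matters to bounding $\|\bra{x}^{\frac12+}|D|(Vu)\|_{L^2L^2}$. Since the Riesz transforms are bounded on $L^{2}(\bra{x}^{a}dx)$ for $a<\frac n2$, one may replace $|D|$ by $\nabla$ and use Leibniz. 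Hypothesis \eqref{eq:assnablaV2} has been tailored so that $\bra{x}^{\frac12+}|V|,\ \bra{x}^{\frac12+}|\nabla V|\lesssim w_\sigma^{-1/2}$, which gives
\begin{equation*}
\|\bra{x}^{\frac12+}|D|(Vu)\|_{L^2L^2}\lesssim \|w_\sigma^{-1/2}u\|_{L^2L^2}+\|w_\sigma^{-1/2}\nabla u\|_{L^2L^2},
\end{equation*}
and this is dominated by $\|f\|_{H^1}$ thanks to the smoothing estimates \eqref{eq:smoothdir} and \eqref{eq:smoonablau}.

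For the angular endpoint \eqref{eq:enddiracVang}, the same Duhamel argument is applied after introducing the modified operator $\widetilde\Lambda^s_\omega$ from Corollary \ref{cor:freedirac}, which commutes with $\mathcal{D}$ and with radial weights; the equivalence $\|\widetilde\Lambda^s_\omega g\|_{L^2_\omega}\simeq\|\Lambda^s_\omega g\|_{L^2_\omega}$ allows one to move freely between the two. The nonhomogeneous free endpoint reduces matters to $\|\bra{x}^{\frac12+}|D|\Lambda^s_\omega(Vu)\|_{L^2L^2}$, which after Leibniz splits into angular versions of the two terms above. Here the key tool is the product estimate \eqref{eq:prodest}, which combined with the angular hypothesis \eqref{eq:nablaangV2} yields fiber-wise on $\mathbb{S}^{2}$
\begin{equation*}
\bra{r}^{\frac12+}\|\Lambda^s_\omega[(\nabla V)u](r,\cdot)\|_{L^2_\omega}\lesssim w_\sigma(r)^{-1/2}\|\Lambda^s_\omega u(r,\cdot)\|_{L^2_\omega},
\end{equation*}
and similarly for $V\nabla u$; integrating in $(r,t)$ and using the elementary bound $w_\sigma(r)\lesssim\bra{r}^{1+}$ (any logarithm is dominated by any power) reduces everything to $\|w_\sigma^{-1/2}\Lambda^s_\omega u\|_{L^2L^2}+\|w_\sigma^{-1/2}\Lambda^s_\omega\nabla u\|_{L^2L^2}$. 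The angular smoothing estimates \eqref{eq:smoothdirang} and \eqref{eq:smoonablauang} then close the argument.

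For the energy estimate \eqref{eq:energyang} the same scheme applies, but now we need the $L^\infty_tL^2_x$ companion of \eqref{eq:smoofreeD}: by $TT^*$ (with Christ--Kiselev for the retarded integral) one obtains
\begin{equation*}
\Bigl\|\int_0^t e^{i(t-s)\mathcal{D}}G\,ds\Bigr\|_{L^\infty_tL^2_x}\lesssim\|w_\sigma^{1/2}G\|_{L^2L^2}.
\end{equation*}
Applied with $G=\widetilde\Lambda^s_\omega(Vu)$ and with $G=\widetilde\Lambda^s_\omega\nabla(Vu)$ (using that $\nabla$ commutes with $\mathcal{D}$), this reduces $\|\widetilde\Lambda^s_\omega u\|_{L^\infty L^2}$ and $\|\widetilde\Lambda^s_\omega\nabla u\|_{L^\infty L^2}$ to exactly the weighted quantities already controlled in the proof of \eqref{eq:enddiracVang}, yielding $\|\Lambda^s_\omega u\|_{L^\infty H^1}\lesssim\|\Lambda^s_\omega f\|_{H^1}$.

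The main obstacle is really the bookkeeping of weights: one must verify that the gain $\bra{x}^{\frac12+}$ afforded by the free Strichartz endpoint precisely cancels the loss engineered into the hypothesis \eqref{eq:assnablaV2} to produce the $w_\sigma^{-1/2}$ weight demanded by the smoothing side, and that the product estimate on $\mathbb{S}^{2}$ (valid only for $s>1$) meshes cleanly with the fiber-wise angular bounds on $V$ and $\nabla V$. This calibration is exactly what dictates the specific form of the hypotheses \eqref{eq:assnablaV2}--\eqref{eq:nablaangV2} and the restriction $s>1$.
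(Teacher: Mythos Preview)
Your proposal is correct and follows essentially the same approach as the paper's proof: the same Duhamel decomposition against the free flow, the same use of \eqref{eq:freedirac}--\eqref{eq:freediracnh} for the endpoint, the Riesz-transform trick to trade $|D|$ for $\nabla$, the product estimate \eqref{eq:prodest} for the angular versions, and the $TT^{*}$/Christ--Kiselev argument for the energy bound. One minor point: in the energy estimate the paper applies $|D|$ (which commutes with both $\mathcal{D}$ and $\Lambda_{\omega}$) rather than $\nabla$, which avoids having to think about the commutator $[\Lambda^{s}_{\omega},\partial_{j}]$; your route via $\nabla$ and $\widetilde\Lambda^{s}_{\omega}$ works too but is slightly less clean. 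The remark about $w_{\sigma}(r)\lesssim\langle r\rangle^{1+}$ is unnecessary---the hypothesis \eqref{eq:nablaangV2} already produces the $w_{\sigma}^{-1/2}$ weight directly after the product estimate.
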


\begin{proof}
  Consider first \eqref{eq:enddiracV}.
  Notice that $V$ satisfies in particular the assumptions of
  Corollary \ref{cor:nablau}. We can write
  \begin{equation}\label{eq:represI-II}
    e^{it(\mathcal{D}+V)}f=I+II
  \end{equation}
  with
  \begin{equation*}
    I=e^{it\mathcal{D}}f,\qquad
    II=i\int_{0}^{t}e^{i(t-s)\mathcal{D}}Vu\,ds.
  \end{equation*}
  The term $I$ is estimated directly using 
  \eqref{eq:freedirac} with $s=0$.
  On the other hand, applying \eqref{eq:freediracnh}
  to the term $II$ we get
  \begin{equation*}
    \|II\|_{L^2_tL^\infty_{r}L^2_\omega}
    \lesssim
    \|\bra{x}^{\frac12+}|D|(Vu)\|_{L^{2}L^{2}}.
    % \le 
    % \|\bra{x}^{\frac12+}w_{\sigma}V\|_{L^{\infty}_{x}}
    % \|w_{\sigma}^{-1}u\|_{L^{2}L^{2}}
  \end{equation*}
  Now we recall that the Riesz operators $|D|^{-1}\nabla$
  are bounded on weighted $L^{2}$ spaces with $A_{2}$ weights,
  and $\bra{x}^{s}$ belongs to this class provided $s<n/2$
  (see \cite{Stein93-a}). Thus we can continue the chain of
  inequalities as follows:
  \begin{equation*}
    =\|\bra{x}^{\frac12+}|D|^{-1}\nabla |D|(Vu)\|_{L^{2}L^{2}}
    \lesssim
    \|\bra{x}^{\frac12+}\nabla(Vu)\|_{L^{2}L^{2}}
    \lesssim A+B
  \end{equation*}
  where
  \begin{equation*}
    A=\|\bra{x}^{\frac12+}(\nabla V)u\|_{L^{2}L^{2}},\qquad
    B=\|\bra{x}^{\frac12+}V\nabla u \|_{L^{2}L^{2}}
  \end{equation*}
  Then we have
  \begin{equation*}
    A\le 
    \|\bra{x}^{\frac12+}w_{\sigma}^{\frac12}\nabla V\|_{L^{\infty}}
    \|w_{\sigma}^{-\frac12}u\|_{L^{2}L^{2}}
    \lesssim \|f\|_{L^{2}}
  \end{equation*}
  by the assumptions on $\nabla V$ and \eqref{eq:smoothdir},
  while
  \begin{equation*}
    B\le 
    \|\bra{x}^{\frac12+}w_{\sigma}^{\frac12}V\|_{L^{\infty}}
    \|w_{\sigma}^{-\frac12}\nabla u\|_{L^{2}L^{2}}
    \lesssim \|f\|_{H^{1}}
  \end{equation*}
  by \eqref{eq:smoonablau}. Summing up, we arrive at
  \eqref{eq:enddiracV}.

  The proof of \eqref{eq:enddiracVang} is similar. We estimate
  $I$ using \eqref{eq:freedirac}. Applying \eqref{eq:freediracnh}
  to the term $II$ we get
  \begin{equation*}
    \|\Lambda^{s}_{\omega} II\|_{L^2_tL^\infty_{r}L^2_\omega}
    \lesssim
    \|\bra{x}^{\frac12+}|D|\Lambda^{s}_{\omega}(Vu)\|_{L^{2}L^{2}}.
  \end{equation*}
  Then we commute $|D|$ with $\Lambda_{\omega}$,
  and we can replace the operator $|D|$ with $\nabla$ since
  the norm is equivalent; we arrive at
  \begin{equation*}
    \|\Lambda^{s}_{\omega} II\|_{L^2_tL^\infty_{r}L^2_\omega}
    \lesssim 
    \|\bra{x}^{\frac12+}\Lambda^{s}_{\omega}(\nabla V)u\|_{L^{2}L^{2}}+
    \|\bra{x}^{\frac12+}\Lambda^{s}_{\omega}V(\nabla u)\|_{L^{2}L^{2}}.
  \end{equation*}
  Now we use the product estimate \eqref{eq:prodest} and
  assumptions \eqref{eq:nablaangV2} to obtain
  \begin{equation*}
    \lesssim
    C\|w_{\sigma}^{-1/2}\Lambda^{s}_{\omega}u\|_{L^{2}L^{2}}+
    \delta\|w_{\sigma}^{-1/2}\Lambda^{s}_{\omega}
       \nabla u\|_{L^{2}L^{2}}
  \end{equation*}
  and recalling the smoothing estimates 
  \eqref{eq:smoothdirang}, \eqref{eq:smoonablauang} we conclude
  the proof of \eqref{eq:enddiracVang}.
  
  It remains to prove \eqref{eq:energyang}. Consider first
  the free case $V \equiv0$. We have the conservation laws
  \begin{equation}\label{eq:consL2}
    \|e^{it \mathcal{D}}f\|_{L^{\infty}L^{2}}\equiv\|f\|_{L^{2}},
    \qquad
    \|\mathcal{D}
      e^{it \mathcal{D}}f\|_{L^{\infty}L^{2}}
    \equiv\|\mathcal{D}f\|_{L^{2}}
  \end{equation}
  which imply
  \begin{equation*}
    \|e^{it \mathcal{D}}f\|_{L^{\infty}H^{1}}
    \simeq\|f\|_{H^{1}}
  \end{equation*}
  since $\|\mathcal{D}f\|_{L^{2}}\simeq\|f\|_{\dot H^{1}}$.
  Moreover, the operator $\widetilde{\Lambda}_{\omega}$
  introduced in \eqref{eq:actLati} commutes with $\mathcal{D}$,
  so that we have for all $s\ge0$
  \begin{equation*}
    \|\widetilde{\Lambda}_{\omega}^{s}
       e^{it \mathcal{D}}f\|_{L^{\infty}H^{1}}
    \equiv\|\widetilde{\Lambda}_{\omega}f\|_{H^{1}}
  \end{equation*}
  and switching back to the equivalent operator $\Lambda_{\omega}$
  as in the proof of Corollary \ref{cor:freedirac} we obtain
  \begin{equation}\label{eq:energyangfree}
    \|{\Lambda}_{\omega}^{s}
       e^{it \mathcal{D}}f\|_{L^{\infty}H^{1}}
    \lesssim\|{\Lambda}_{\omega}f\|_{H^{1}}.
  \end{equation}
  Consider now the
  case $V\not \equiv 0$. We start from
  \begin{equation}\label{eq:IIfree}
    \|\bra{x}^{-\frac12-}
      e^{it \mathcal{D}}f\|_{L^{2}L^{2}}\le\|f\|_{L^{2}}
  \end{equation}
  which is a consequence of \eqref{eq:smoofreeD} (we relaxed
  the weight). Taking the dual of \eqref{eq:IIfree} we get
  \begin{equation*}
    \left\|
      \int e^{-it' \mathcal{D}}F(t')dt'
    \right\|_{L^{2}}\lesssim
    \|\bra{x}^{\frac12+}F\|_{L^{2}L^{2}}
  \end{equation*}
  which together with \eqref{eq:consL2} gives
  \begin{equation*}
    \left\|
      \int e^{i(t-t') \mathcal{D}}F(t')dt'
    \right\|_{L^{\infty} L^{2}}\lesssim
    \|\bra{x}^{\frac12+}F\|_{L^{2}L^{2}}.
  \end{equation*}
  Now a standard application of Christ-Kiselev' Lemma in the
  spirit of \cite{KeelTao98-a} (see also \cite{DanconaFanelli08-a} 
  for the case of Dirac equations) allows to replace the time 
  integral with a truncated integral and we obtain
  \begin{equation}\label{eq:mixedest}
    \left\|
      \int_{0}^{t} e^{i(t-t') \mathcal{D}}F(t')dt'
    \right\|_{L^{\infty} L^{2}}\lesssim
    \|\bra{x}^{\frac12+}F\|_{L^{2}L^{2}}.
  \end{equation}
  Recalling that the operator $\widetilde{\Lambda}_{\omega}$
  introduced in \eqref{eq:actLati} commutes with $\mathcal{D}$,
  and proceeding as in the proof of Corollary \ref{cor:freedirac}
  we obtain for all $s\ge0$
  \begin{equation}\label{eq:mixedestang}
    \left\|\Lambda_{\omega}^{s}
      \int_{0}^{t} e^{i(t-t') \mathcal{D}}F(t')dt'
    \right\|_{L^{\infty} L^{2}}\lesssim
    \|\bra{x}^{\frac12+}
       \Lambda_{\omega}^{s}F\|_{L^{2}L^{2}}
  \end{equation}
  and finally, applying $|D|$ which commutes both with $\mathcal{D}$
  and $\Lambda_{\omega}$, we have also
  \begin{equation}\label{eq:mixedestangH1}
    \left\|\Lambda_{\omega}^{s}
      \int_{0}^{t} e^{i(t-t') \mathcal{D}}F(t')dt'
    \right\|_{L^{\infty}\dot H^{1}}\lesssim
    \|\bra{x}^{\frac12+}
       \Lambda_{\omega}^{s}|D|F\|_{L^{2}L^{2}}    
  \end{equation}
  Now we use again the representation \eqref{eq:represI-II};
  by \eqref{eq:energyangfree} and \eqref{eq:mixedestangH1}
  we can write
  \begin{equation*}
    \|\Lambda_{\omega}^{s}
      e^{it(\mathcal{D}+V)}f\|_{L^{\infty}\dot H^{1}}
    \lesssim
    \|\Lambda_{\omega}^{s}f\|_{H^{1}}+
    \|\bra{x}^{\frac12+}\Lambda_{\omega}^{s}|D|(Vu)\|_{L^{2}L^{2}}
  \end{equation*}
  and proceeding exactly as in the first part of the proof
  we arrive at \eqref{eq:energyang}.
\end{proof}

% The operator $\mathcal{D}+V$ is selfadjoint under condition
% \eqref{eq:hardyV} (see \cite{Thaller92-a}). Moreover, by Hardy's
% inequality in 3D $\||x|^{-1}f\|_{L^{2}}\le C\|\nabla f\|_{L^{2}}$
% we see that
% \begin{equation*}
%   \|Vf\|_{L^{2}}\le C \cdot \delta\|\nabla f\|_{L^{2}}
%   \le C' \cdot \delta\|\mathcal{D}f\|_{L^{2}}\qquad
%   (\|\mathcal{D}f\|_{L^{2}}\simeq\|\nabla f\|_{L^{2}})
% \end{equation*}
% which implies \eqref{eq:equivhardy}. Since $\mathcal{D}+V$
% commutes with the flow $e^{it(\mathcal{D}+V)}$, \eqref{eq:energyH1}
% is an immediate consequence of \eqref{eq:equivhardy} and the
% conservation of $L^{2}$ norm.

 % section the_dirac_equation_with_potential (end)

\section{The nonlinear Dirac equation}\label{sec:NLD}  %(fold)

Theorem \ref{the:strichD} contains all the necessary tools to
prove global well posedness for the cubic nonlinear Dirac
equation
\begin{equation}\label{eq:NLD}
  iu_{t}=\mathcal{D}u+Vu+P_{3}(u,\overline{u}),\qquad
  u(0,x)=f(x).
\end{equation}
Our result is the following:

\begin{theorem}\label{the:globalNL}
  Consider the perturbed Dirac system \eqref{eq:NLD}, where
  the $4\times4$ matrix valued potential $V=V(|x|)$
  is hermitian and satisfies assumptions \eqref{eq:nablaangV2}.
  Let $P_{3}(u,\overline{u})$ be a $\mathbb{C}^{4}$-valued homogeneous
  cubic polynomial.
  Then for any $s>1$ there exists $\epsilon_{0}$ such that
  for all initial data satisfying
  \begin{equation}\label{eq:data}
    \|\Lambda_{\omega}^{s}f\|_{H^{1}}<\epsilon_{0}
  \end{equation}
  the Cauchy problem \eqref{eq:NLD} admits a unique global solution
  $u\in CH^{1}\cap L^{2}L^{\infty}$
  with $\Lambda^{s}_{\omega}u\in L^{\infty}H^{1}$.
\end{theorem}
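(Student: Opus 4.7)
The plan is to solve the Cauchy problem by a contraction mapping argument on the perturbed Duhamel reformulation
\begin{equation*}
  u = \Phi(u) := e^{it(\mathcal{D}+V)}f - i \int_0^t e^{i(t-s)(\mathcal{D}+V)} P_3(u, \overline{u})(s) \, ds
\end{equation*}
in the Banach space defined by
\begin{equation*}
  \|u\|_X := \|\Lambda^s_\omega u\|_{L^\infty_t H^1_x} + \|\Lambda^s_\omega u\|_{L^2_t L^\infty_r L^2_\omega}.
\end{equation*}
This norm is tailored exactly to match the conclusions \eqref{eq:enddiracVang}--\eqref{eq:energyang} of Theorem \ref{the:strichD}. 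Two embeddings should be recorded at the outset: since $s>1$ and $\dim \mathbb{S}^2=2$, Sobolev embedding on the sphere gives $\|u\|_{L^2_t L^\infty_x} \lesssim \|\Lambda^s_\omega u\|_{L^2_t L^\infty_r L^2_\omega} \leq \|u\|_X$, while $\|u\|_{L^\infty_t H^1_x} \leq \|u\|_X$ is trivial. Thus any fixed point of $\Phi$ in $X$ automatically belongs to the class claimed in the statement.

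The linear bound $\|e^{it(\mathcal{D}+V)}f\|_X \lesssim \|\Lambda^s_\omega f\|_{H^1}$ is exactly the content of Theorem \ref{the:strichD}. For the nonlinear term I would first establish the inhomogeneous analogue
\begin{equation*}
  \left\|\int_0^t e^{i(t-s)(\mathcal{D}+V)} F(s)\, ds\right\|_X \lesssim \|\Lambda^s_\omega F\|_{L^1_t H^1_x},
\end{equation*}
by Minkowski's inequality in the $s$-variable combined with the two homogeneous estimates of Theorem \ref{the:strichD}: the inner norm of $e^{i(t-s)(\mathcal{D}+V)} F(s)$, viewed as a function of $t$, equals its $s=0$ version by translation invariance of $L^2_t$ (or $L^\infty_t$), so the $s$-integral produces $\|\Lambda^s_\omega F\|_{L^1_t H^1_x}$ directly. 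The assumption that $V=V(|x|)$ is radial is used here to ensure that the modified operator $\widetilde{\Lambda}^s_\omega$ introduced in the proof of Corollary \ref{cor:freedirac} commutes both with $\mathcal{D}$ and with multiplication by $V$, so the norm equivalences used in that corollary transfer to the perturbed flow.

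The main obstacle, and the heart of the argument, is the trilinear estimate
\begin{equation*}
  \|\Lambda^s_\omega P_3(u, \overline{u})\|_{L^1_t H^1_x} \lesssim \|u\|_X^3.
\end{equation*}
Time integrability is distributed by H\"older as $L^1_t \hookrightarrow L^\infty_t \cdot L^2_t \cdot L^2_t$: one cubic factor is placed in the $L^\infty_t H^1_x$ component of $X$, the other two in $L^2_t L^\infty_x$, which is controlled by $\|u\|_X$ through the spherical Sobolev embedding mentioned above. The spatial gradient and the angular operator $\Lambda^s_\omega$ are then distributed across the three factors by Leibniz, the differentiations being assigned to the factor placed in $L^\infty_t H^1_x$. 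Pointwise on each sphere $r\mathbb{S}^2$ the angular regularity is handled by the product estimate \eqref{eq:prodest}, valid because $H^s(\mathbb{S}^2)$ is an algebra for $s>1$; iterating it once yields $\|\Lambda^s_\omega(u_1u_2u_3)\|_{L^2_\omega} \lesssim \prod_i \|\Lambda^s_\omega u_i\|_{L^2_\omega}$, and combining with $H^s(\mathbb{S}^2) \hookrightarrow L^\infty_\omega$ delivers the cubic bound. Putting the three estimates together gives $\|\Phi(u)\|_X \leq C\|\Lambda^s_\omega f\|_{H^1} + C\|u\|_X^3$, and the analogous Lipschitz bound $\|\Phi(u)-\Phi(v)\|_X \lesssim (\|u\|_X^2+\|v\|_X^2)\|u-v\|_X$ follows from multilinearity of $P_3$. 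For $\epsilon_0$ sufficiently small the map $\Phi$ is then a contraction on the ball of radius $\sim \epsilon_0$ in $X$, yielding the unique global solution in the required class.
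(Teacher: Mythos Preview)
Your proposal is correct and follows essentially the same route as the paper: contraction in the space $X$ with norm $\|\Lambda^{s}_{\omega}u\|_{L^{\infty}_{t}H^{1}_{x}}+\|\Lambda^{s}_{\omega}u\|_{L^{2}_{t}L^{\infty}_{r}L^{2}_{\omega}}$, the linear bound coming from Theorem~\ref{the:strichD}, the Duhamel term handled by Minkowski in $s$, and the trilinear estimate $\|\Lambda^{s}_{\omega}P_{3}(u,\overline{u})\|_{L^{1}_{t}H^{1}_{x}}\lesssim\|u\|_{X}^{3}$ via the algebra property \eqref{eq:prodest} on $\mathbb{S}^{2}$ plus H\"older $L^{1}_{t}=L^{\infty}_{t}\cdot L^{2}_{t}\cdot L^{2}_{t}$. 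One small remark: your appeal to radiality of $V$ (so that $\widetilde{\Lambda}^{s}_{\omega}$ commutes with multiplication by $V$) is not actually needed here, since the estimates \eqref{eq:enddiracVang}--\eqref{eq:energyang} of Theorem~\ref{the:strichD} already hold under the angular regularity hypothesis \eqref{eq:nablaangV2} alone, and that is all the contraction argument uses.
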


\begin{proof}%[of ...]
  The proof is based on a fixed point argument in the space
  $X$ defined by the norm
  \begin{equation}\label{eq:spaceX}
    \|u\|_{X}:=
    \|\Lambda^{s}_{\omega}u\|_{L^{2}_{t}L^{\infty}_{r}L^{2}_{\omega}}
    +
    \|\Lambda^{s}_{\omega}u\|_{L^{\infty}_{t}H^{1}_{x}}.
  \end{equation}
  Notice that in Theorem \ref{the:strichD} we proved the estimate
  \begin{equation}\label{eq:estX}
    \|e^{it(\mathcal{D}+V)}f\|_{X}\lesssim
    \|\Lambda^{s}_{\omega}\|_{H^{1}}.
  \end{equation}
  Define $u=\Phi(v)$ for $v\in X$ as the solution of the
  linear problem
  \begin{equation}\label{eq:linearized}
    iu_{t}=\mathcal{D}u+Vu+P(v,\overline{v}),\qquad
    u(0,x)=f(x)
  \end{equation}
  and represent $u$ as
  \begin{equation*}
    u=\Phi(v)=
    e^{it(\mathcal{D}+V)}f+
      i \int_{0}^{t}e^{i(t-t')(\mathcal{D}+V)}
         P(v(t'),\overline{v(t')})dt'.
  \end{equation*}
  % Recall that the operator 
  %   \begin{equation*}
  %     \mathcal{D}_V=\mathcal{D}+V
  %   \end{equation*}
  %   is selfadjoint under
  %   the stated assumptions on the potential 
  %   (see \cite{DanconaFanelli07-a}, \cite{DanconaFanelli08-a}), thus
  %   the propagator $e^{it\mathcal{D}_V}$ is well defined and unitary 
  %   on $L^{2}$, and we
  %   need only show that $\Phi$ is a contraction on a suitable bounded
  %   subset of $X$. 
  We recall now the product estimate
  \begin{equation*}%\label{eq:prodest}
    \|\Lambda^{s}_{\omega}(gh)\|_{L^{2}_{\omega}(\mathbb{S}^{2})}
    \lesssim
    \|\Lambda^{s}_{\omega}g\|_{L^{2}_{\omega}(\mathbb{S}^{2})}
    \|\Lambda^{s}_{\omega}h\|_{L^{2}_{\omega}(\mathbb{S}^{2})}
  \end{equation*}
  (see \eqref{eq:prodest}). Then we have, by \eqref{eq:estX}
  \begin{equation*}
  \begin{split}
    \|u\|_{X}
    &\lesssim
    \|\Lambda^{s}_{\omega}f\|_{H^{1}}+
    \int_{0}^{\infty}
    \|e^{i(t-t')\mathcal{D}}P(v(t'),\overline{v(t')})\|_{X}
    dt'
    \\
    & \lesssim
    \|\Lambda^{s}_{\omega}f\|_{H^{1}}+
    \int_{0}^{\infty}
     \|\Lambda^{s}_{\omega}
      P(v(t'),\overline{v(t')})\|_{H^{1}}dt'
    \equiv
    \|\Lambda^{s}_{\omega}f\|_{H^{1}}+
    \|\Lambda^{s}_{\omega}
      P(v,\overline{v})\|_{L^{1}H^{1}}.
  \end{split}
  \end{equation*}
  By \eqref{eq:prodest} we have
  \begin{equation*}
    \|\Lambda_{\omega}^{s}(v^{3})\|_{L^{2}_{\omega}(\mathbb{S}^{2})}
    \lesssim
    \|\Lambda_{\omega}^{s}v\|^{3}_{L^{2}_{\omega}(\mathbb{S}^{2})}
  \end{equation*}
  whence
  \begin{equation*}
    \|\Lambda_{\omega}^{s}(v^{3})\|_{L^{2}_{x}}
    \lesssim
    \|\Lambda_{\omega}^{s}v\|_{L^{2}_{x}}
    \|\Lambda_{\omega}^{s}v\|^{2}_{L^{\infty}_{r} L^{2}_{\omega}}
  \end{equation*}
  and
  \begin{equation}\label{eq:prod1}
    \|\Lambda_{\omega}^{s}(v^{3})\|_{L^{1}_{t}L^{2}_{x}}
    \lesssim
    \|\Lambda_{\omega}^{s}v\|_{L^{\infty}_{t} L^{2}_{x}}
    \|\Lambda_{\omega}^{s}v\|^{2}
        _{L^{2}_{t}L^{\infty}_{r} L^{2}_{\omega}}
    \le \|v\|_{X}^{3}.
  \end{equation}
  In a similar way,
  \begin{equation*}
    \|\Lambda_{\omega}^{s}\nabla(v^{3})\|
          _{L^{2}_{\omega}(\mathbb{S}^{2})}
    \lesssim
    \|\Lambda_{\omega}^{s}\nabla v\|
        _{L^{2}_{\omega}(\mathbb{S}^{2})}
    \|\Lambda_{\omega}^{s}v\|^{2}_{L^{2}_{\omega}(\mathbb{S}^{2})}
  \end{equation*}
  so that
  \begin{equation*}
    \|\Lambda_{\omega}^{s}\nabla(v^{3})\|
          _{L^{2}_{x}}
    \lesssim
    \|\Lambda_{\omega}^{s}\nabla v\|
        _{L^{2}_{x}}
    \|\Lambda_{\omega}^{s}v\|^{2}
        _{L^{\infty}_{r} L^{2}_{\omega}}
  \end{equation*}
  and
  \begin{equation}\label{eq:prod2}
    \|\Lambda_{\omega}^{s}\nabla(v^{3})\|_{L^{1}_{t}L^{2}_{x}}
    \lesssim
    \|\Lambda_{\omega}^{s}\nabla v\|_{L^{\infty}_{t} L^{2}_{x}}
    \|\Lambda_{\omega}^{s}v\|^{2}
        _{L^{2}_{t}L^{\infty}_{r} L^{2}_{\omega}}
    \le \|v\|_{X}^{3}.
  \end{equation}
  In conclusion, \eqref{eq:prod1} and \eqref{eq:prod2} imply
  \begin{equation*}
     \|\Lambda^{s}_{\omega}
        P(v,\overline{v})\|_{L^{1}H^{1}}\lesssim
    \|v\|^{3}_{X}
  \end{equation*}
  and the estimate for $u=\Phi(v)$ is
  \begin{equation*}
    \|u\|_{X}\equiv\|\Phi(v)\|_{X}\lesssim
    \|\Lambda^{s}_{\omega} f\|_{H^{1}}+\|v\|_{X}^{3}.
  \end{equation*}
  An analogous computation gives the estimate
  \begin{equation*}
    \|\Phi(v)-\Phi(w)\|_{X}\lesssim
    \|v-w\|_{X}\cdot(\|v\|_{X}+\|w\|_{X})^{2}
  \end{equation*}
  and an application of the contraction mapping theorem
  concludes the proof.
\end{proof}

% \printbibliography
\end{document}